\documentclass[11pt]{amsart}
\usepackage{amsmath,amssymb,amsthm}
\usepackage{enumitem}
\usepackage{needspace}
\usepackage{tikz}
\usepackage{xcolor}
\tikzset{
  quiver/.style={>=stealth,semithick},
  quiver vertex/.style={circle,draw,fill=white,inner sep=1.5pt,
    minimum size=15pt,font=\small},
  quiver label/.style={font=\scriptsize,inner sep=1.5pt}
}
\usepackage[colorlinks=true,citecolor=red,linkcolor=blue,urlcolor=blue]{hyperref}
\newcommand{\doi}[1]{\href{https://doi.org/#1}{doi:\nolinkurl{#1}}}
\setlist[enumerate]{itemsep=1pt,topsep=3pt}
\numberwithin{equation}{section}
\newtheorem{theorem}{Theorem}[section]
\newtheorem{proposition}[theorem]{Proposition}
\newtheorem{lemma}[theorem]{Lemma}
\newtheorem{corollary}[theorem]{Corollary}
\newtheorem{theoremA}{Theorem}

\theoremstyle{definition}
\newtheorem{definition}[theorem]{Definition}
\newtheorem{example}[theorem]{Example}
\newtheorem{remark}[theorem]{Remark}

\DeclareMathOperator{\Ext}{Ext}
\DeclareMathOperator{\rank}{rank}
\DeclareMathOperator{\gldim}{gldim}
\newcommand{\kk}{\mathbf{k}}
\newcommand{\Z}{\mathbb Z}
\newcommand{\U}{\widetilde A}

\begin{document}
\title[Derived tame monomial algebras]{Derived tame monomial algebras}

\author{Marlon Pimenta Fonseca}
\address{Department of Mathematics and Applied Mathematics,
Federal University of Lavras (UFLA) -- Brazil}
\email{marlon.fonseca@ufla.br}

\author{Zheng Xin}
\address{School of Mathematics and Statistics, Shaanxi Normal University, Xi'an 710062, China}
\email{xinzheng1314@yeah.net}

\keywords{Monomial algebra, derived tame algebra, universal covering, Euler form, semichain, repetitive category}
\subjclass[2020]{16G60, 16E35}
\date{}

\begin{abstract}
We prove that a finite-dimensional connected monomial algebra over
an algebraically closed field is derived tame if and only if every
finite connected convex subcategory of its universal cover has
non-negative Euler form. Derived tameness is also equivalent to
tameness of its repetitive category and trivial extension.
For one-cycle algebras, this condition is equivalent to the non-negativity
on $\mathbb R$ of $|Q_0|$ explicitly computable polynomials in
$\mathbb Q[t]$. This gives a finite algorithm for deciding derived tameness.
We also construct an infinite family of derived-tame one-cycle string
algebras with minimal cubic relations, no two of which are derived equivalent.
\end{abstract}

\maketitle
\setcounter{tocdepth}{2} 

\tableofcontents
\section*{Introduction}

Br\"ustle and Geiss independently proved that a finite tree
algebra is derived tame if and only if its Euler form is
non-negative \cite{Bru01,Gei02}. For triangular monomial algebras, Novoa and de la Peña proved that derived tameness is equivalent to the tameness of the trivial extension~\cite[Theorem~2.1]{NdP01}, and that these conditions hold precisely when every finite convex subcategory of the universal cover has a non-negative Euler form~\cite[Corollary~3.2]{NdP01}. Derived-tame quadratic string algebras and Nakayama algebras
were classified in \cite{Fon22} and \cite{BGV21}, respectively.
Bekkert and Merklen described the indecomposable objects in
the derived categories of gentle algebras \cite{BM03}.

Theorem~\ref{thm:monomial} extends the local Euler criterion and the equivalences in~\cite[Theorem~2.1 and Corollary~3.2]{NdP01} to all finite-dimensional connected monomial algebras, accommodating arbitrary oriented cycles and algebras with infinite global dimension. When the global dimension is infinite, the usual Ext formula
need not define an Euler form on $K_0(A)$. To overcome this limitation, our proof operates locally on finite connected convex full subcategories $B \subseteq \tilde{A}$ of the universal cover. As finite tree subcategories, each $B$ has finite global dimension, ensuring that the local Euler form $\chi_B$ is well-defined. Under the local Euler condition ($\chi_B \ge 0$), the repetitive categories $\hat{B}$ are tame by the theorem of Geiss and Krause~\cite{GK02}. Lemma~\ref{lem:repetitive-local} then extends this tameness to the repetitive category $\widehat{\tilde{A}}$ of the cover, and the covering theorem establishes tameness of $\widehat{A}$, yielding derived tameness of $A$.


The local Euler criterion involves infinitely many finite convex
subcategories. For a one-cycle algebra, Theorem~\ref{thm:fourier}
replaces these checks by a finite computation from the quiver and
its monomial relations. The deck group is $\Z$, so nonzero paths
between vertex lifts define a Hermitian Laurent polynomial matrix.
The relation between the Euler and Cartan forms, together with a
Fourier calculation, reduces the local Euler condition to positive
semidefiniteness of this matrix on the unit circle. A rational
parametrization of the circle then gives $|Q_0|$ polynomials in
$\mathbb Q[t]$ whose non-negativity on $\mathbb R$ decides derived
tameness. Their degrees are bounded in terms of the longest nonzero path.
The criterion applies to one-cycle
monomial algebras with relations of arbitrary length, including
non-quadratic string algebras.
When the unique cycle is not oriented,
Corollary~\ref{cor:nonoriented-cycle} gives test polynomials
of degree at most $2j$ for $1\leq j\leq|Q_0|$.

The two main results are as follows. Let $\kk$ be an algebraically
closed field and let $A=\kk Q/I$ be finite-dimensional, basic and
connected. Write $\U$ for its universal bound-quiver cover,
$\widehat A$ for its repetitive category, and
$\mathrm T(A)=A\ltimes DA$ for its trivial extension.
We call $A$ \emph{one-cycle} if $|Q_1|-|Q_0|+1=1$.
The underlying undirected graph retains any loops and multiple
edges of $Q$; in the one-cycle case, its cycle may have any orientation.
For a finite tree algebra $B$, its Euler form is non-negative
if $\chi_B(x)\geq0$ for every $x\in K_0(B)$.

\begin{theoremA}\label{thm:monomial}
For a finite-dimensional connected monomial algebra $A$, the
following conditions are equivalent:
\begin{enumerate}[label=\textnormal{(\roman*)}]
\item $A$ is derived tame;
\item every finite connected convex full subcategory $B\subseteq\U$
has non-negative Euler form;
\item $\widehat{\tilde{A}}$ is tame;
\item $\widehat A$ is tame;
\item $\mathrm T(A)$ is tame.
\end{enumerate}
\end{theoremA}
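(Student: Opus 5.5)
We will prove the chain (ii)$\Rightarrow$(iii)$\Rightarrow$(iv)$\Rightarrow$(i)$\Rightarrow$(ii), and separately (iv)$\Leftrightarrow$(v). The guiding observation, already indicated in the introduction, is that every finite connected convex full subcategory $B\subseteq\U$ is a tree algebra with monomial relations. Such a $B$ has finite global dimension, so its Euler form $\chi_B$ is defined on $K_0(B)$ and Happel's embedding $D^b(\mathrm{mod}\,B)\simeq\underline{\mathrm{mod}}\,\widehat B$ is available.

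For (ii)$\Rightarrow$(iii), fix $B$ with $\chi_B\ge0$. By Geiss--Krause \cite{GK02} (equivalently Br\"ustle and Geiss \cite{Bru01,Gei02} for tree algebras), $B$ is derived tame, hence $\widehat B$ is tame. To pass to $\widehat{\tilde A}$, we use the fact that tameness of a locally bounded category is tested on finite full subcategories. Every finite full subcategory of $\widehat{\tilde A}$ lies inside $\widehat B$ for a sufficiently large finite convex $B$; this is where Lemma~\ref{lem:repetitive-local} enters, since the repetitive category of a convex subcategory embeds convexly. This gives (iii).

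For (iii)$\Rightarrow$(iv), note that the group $G\times\Z$ ($G$ the deck group, $\Z$ generated by the Nakayama automorphism) acts freely on $\widehat{\tilde A}$ with quotient $\widehat A$. We then apply the covering theorem of Dowbor--Skowro\'nski: the push-down functor preserves tameness when the group acts freely on objects and the cover is locally support-finite, or alternatively via the Dowbor--Lenzing--Skowro\'nski result for torsion-free groups. Here $G$ is free because the quiver is a monomial bound quiver whose fundamental group is free. I expect the main obstacle to be verifying local support-finiteness, or finding a variant of the covering theorem that avoids it. If it fails, the fallback is to argue through Galois coverings for which tameness descends, using that push-down from a tame cover with a torsion-free group yields a tame quotient (Dowbor--Skowro\'nski 1987 require this finiteness hypothesis, so the check is unavoidable). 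Locally, the check reduces to the finite-length nonzero paths in $\U$.

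For (iv)$\Rightarrow$(i), we use that $D^b(\mathrm{mod}\,A)$ embeds fully faithfully in $\underline{\mathrm{mod}}\,\widehat A$, and this embedding holds even for infinite global dimension. Tameness of $\widehat A$ then gives derived tameness of $A$ in the sense of cohomology-dimension-vector families.

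For (i)$\Rightarrow$(ii), let $B\subseteq\U$ be finite convex. The push-down $\U\to A$ induces an exact functor $D^b(\mathrm{mod}\,B)\to D^b(\mathrm{mod}\,A)$ that preserves indecomposability and non-isomorphism, so wildness of $B$ would lift to $A$. Hence $B$ is derived tame. By Br\"ustle/Geiss, a derived-tame tree algebra has $\chi_B\ge0$.

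For (iv)$\Leftrightarrow$(v), $\mathrm T(A)=\widehat A/\nu$ is a $\Z$-orbit category, and the same covering arguments give (iv)$\Rightarrow$(v). For (v)$\Rightarrow$(iv), push-down gives the converse, because modules over $\widehat A$ push down to pairwise non-isomorphic $\mathrm T(A)$-modules up to shift. Combining this with (iv)$\Rightarrow$(i)$\Rightarrow$(ii)$\Rightarrow$(iii)$\Rightarrow$(iv) closes the loop, so tameness of $\mathrm T(A)$ implies tameness of $\widehat A$.
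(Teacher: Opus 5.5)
There is a genuine gap, in the covering steps (iii)$\Rightarrow$(iv) and (iv)$\Rightarrow$(v). Your other implications follow the paper's route and are fine. For (ii)$\Rightarrow$(iii) you use Lemma~\ref{lem:cover} with Lemma~\ref{lem:repetitive-local} and the Geiss--Krause converse for finite global dimension. For (iv)$\Rightarrow$(i) you use Geiss--Krause. For (i)$\Rightarrow$(ii) you push down derived wildness as in \cite[Lemma~2.8(a)]{Fon22}.

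One stated reason in (ii)$\Rightarrow$(iii) is wrong, though the invocation is valid. $\widehat B$ is a full subcategory of $\widehat{\U}$, but in general it is not convex. A morphism $(x,0)\to(y,1)$ with $x,y\in B_0$ can factor through some $(z,0)$ with $z\notin B_0$. This is exactly why Lemma~\ref{lem:repetitive-local} compares the ideals $J_B\subseteq J_R$ and needs only fullness.

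\textbf{The quotient group.} The group with quotient $\widehat A$ is the deck group $G$ alone (Lemma~\ref{lem:repetitive-cover}). Dividing also by $\langle\nu\rangle$ gives $\mathrm T(A)$, not $\widehat A$.

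\textbf{The hard direction.} You try to get ``tame cover $\Rightarrow$ tame quotient'' from Dowbor--Skowro\'nski by verifying local support-finiteness, and you call this check unavoidable. The check fails in general, already for derived-tame algebras.
\begin{enumerate}
\item Take $Q$ the Kronecker quiver, or any acyclic orientation of a cycle (Proposition~\ref{prop:nonoriented-cycle-path}). Then $\U$ is a hereditary line of type $A_\infty^\infty$. The thin interval modules on ever longer intervals through a fixed vertex are indecomposable. Placed in layer $0$, they are $\widehat{\U}$-modules, since nothing between layer-$0$ objects factors through another layer. Their supports through that vertex are unbounded, so $\widehat{\U}$ is not locally support-finite.
\item Take $A=\kk[x]/(x^2)$, so $\mathrm T(A)\cong\kk[x,y]/(x^2,y^2)$. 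Grade it by $\deg x=0$ and $\deg y=1$. The graded zigzag modules then have unbounded support, so $\widehat A$ is not locally support-finite either.
\end{enumerate}
So the route you describe cannot be completed.

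\textbf{The missing ingredient.} What you need is the covering theorem for torsion-free groups with no finiteness hypothesis: Drozd--Ovsienko \cite[Theorem~9.1(1)]{DO00}, \cite[Corollary~10.7(1)]{Dro01}. If a torsion-free group acts freely on a locally bounded category $R$, then $R$ is tame if and only if $R/G$ is tame. Apply it to $\widehat{\U}\to\widehat A$, where $G$ is free, and to $\widehat A\to\mathrm T(A)$, where the group is $\langle\nu\rangle\cong\Z$. This gives (iii)$\Leftrightarrow$(iv) and (iv)$\Leftrightarrow$(v) directly, which is what the paper does. Your separate push-down argument for (v)$\Rightarrow$(iv) then becomes unnecessary.
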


When the underlying graph has a cycle, these conditions are also
equivalent to the condition that every finite connected convex full
subcategory of $\U$ is derived equivalent to a semichain algebra
(Proposition~\ref{prop:semichain}).

\begin{theoremA}\label{thm:main}
Let $A=\kk Q/I$ be a finite-dimensional connected one-cycle
monomial algebra, and form $H(z)$ from the path counts in
\eqref{eq:fourier-symbol} using spanning-tree lifts.
Let $R_Q$ be the arrow ideal, and set
$q=|Q_0|$, $L=\min\{\ell\geq2:R_Q^\ell\subseteq I\}$ and $M=L-1$.
Now, define
\[
K(t)=(1+t^2)^{M}H\!\left(\frac{1+\mathrm i t}{1-\mathrm i t}\right),
\qquad
\det(sI_q+K(t))=s^q+\sum_{j=1}^q c_j(t)s^{q-j}.
\]
The following conditions are equivalent:
\begin{enumerate}[label=\textnormal{(\roman*)}]
\item $A$ is derived tame;
\item $H(z)$ is positive semidefinite for every $|z|=1$;
\item $c_j(t)\geq0$ for every $t\in\mathbb R$ and $1\leq j\leq q$.
\end{enumerate}
Each $c_j$ lies in $\mathbb Q[t]$ and has degree at most
$2j(L-1)$ when nonzero. These polynomials are computable from
$Q$ and a finite list of monomial generators of $I$, and their
non-negativity gives a finite test for derived tameness.
\end{theoremA}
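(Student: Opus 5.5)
The plan is to deduce (i)$\Leftrightarrow$(ii) from Theorem~\ref{thm:monomial}. Then I show (ii)$\Leftrightarrow$(iii) by elementary linear algebra of Hermitian matrices, and finish with the degree bound.

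\textbf{Step 1: reduction to the cover.} The deck group of $\U$ is $\Z$, generated by a shift $\sigma$. Fix a spanning tree $T$ of $Q$ and lift it to get representatives $\tilde x$ of the vertices $x\in Q_0$. Every vertex of $\U$ is then $\sigma^n\tilde x$. Write $c_n(x,y)$ for the number of nonzero paths from $\sigma^n\tilde x$ to $\tilde y$ (or in the opposite convention, as fixed by \eqref{eq:fourier-symbol}). Since $A$ is monomial, $\U$ is a tree-like locally bounded category, and these counts are exactly the Cartan matrix entries of $\U$. Only finitely many $n$ occur, namely $|n|\le M$, because nonzero paths have length at most $M=L-1$. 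Hence $H(z)$ is a Hermitian Laurent polynomial matrix with exponents in $[-M,M]$.

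\textbf{Step 2: Euler form versus Cartan form.} For a finite convex $B\subseteq\U$, the algebra $B$ is a tree algebra of finite global dimension, and $\chi_B(x)=x^{\top}C_B^{-\top}x$. Since $C_B$ is invertible, substituting $x=C_B^{\top}y$ shows that $\chi_B\ge0$ if and only if the symmetric form $y^{\top}\tfrac12(C_B+C_B^{\top})y\ge0$. Moreover, for convex $B$ the Cartan matrix is just the restriction of the Cartan form of $\U$, because paths between vertices of $B$ stay in $B$. Thus (ii) says that the symmetrized Cartan form $S$ of $\U$ is non-negative on every finitely supported vector supported on a convex connected subset. Any finite support is contained in a finite convex connected subcategory, for instance an interval between far-apart lifts. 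So (ii) is equivalent to $S\ge0$ on all of $\bigoplus_{\U_0}\mathbb R$. I expect the main obstacle to be this step: checking carefully that enlarging to a convex hull does not change the restricted form, which uses monomiality and convexity, and that the $\chi_B\leftrightarrow C_B$ transformation is legitimate for arbitrary finite $B$.

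\textbf{Step 3: Fourier.} The operator $S$ is a $\Z$-invariant block Toeplitz operator with symbol $\tfrac12(H(z)+H(z)^*)=H(z)$ on $|z|=1$. A standard argument then gives $S\ge0$ on finitely supported vectors if and only if $H(z)\succeq0$ for all $|z|=1$. One direction uses Parseval, writing $\langle Sy,y\rangle=\frac1{2\pi}\int \hat y^*H\hat y$. The other uses Fejér-kernel test vectors $y_N=\sum_{|n|\le N}z_0^n v\,e_n$, for which $\langle Sy_N,y_N\rangle/(2N+1)\to v^*H(z_0)v$. Real versus complex vectors cause no problem, since $S$ is real symmetric. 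This proves (ii)$\Leftrightarrow$ the second condition.

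\textbf{Step 4: polynomials.} The Cayley map $t\mapsto z=(1+it)/(1-it)$ parametrizes the circle minus $z=-1$, and $z^n=(1+it)^n(1-it)^{-n}$. Multiplying by $(1+t^2)^M$ clears all denominators for $|n|\le M$, so $K(t)$ is a Hermitian polynomial matrix of degree at most $2M$. The scalar $(1+t^2)^M$ is positive, so $K(t)\succeq0$ if and only if $H(z)\succeq0$, and the point $z=-1$ follows by continuity. The coefficients $c_j$ are the elementary symmetric functions of the eigenvalues, i.e.\ sums of principal $j$-minors. A Hermitian matrix is PSD if and only if all $c_j\ge0$: for the converse, $\det(sI+K)$ then has no positive real root, so it has no root $s>0$ and no eigenvalue $<0$. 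Each $c_j$ is real because $K$ is Hermitian. It has rational coefficients because the entries of $K$ lie in $\mathbb Q[i][t]$ and are invariant under the conjugation $i\mapsto -i$ combined with transpose, which preserves principal minors. Finally, $\deg c_j\le j\cdot 2M=2j(L-1)$. Computability holds because the path counts are read off finitely from $Q$ and the monomial generators of $I$.
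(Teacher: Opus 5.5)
Your proposal is correct and follows essentially the paper's route. You use Theorem~\ref{thm:monomial} to reduce (i) to the local Euler condition, and then you reproduce the proof of Theorem~\ref{thm:fourier}: the Euler--Cartan congruence on convex hulls in the covering tree, Parseval and Fej\'er-type test vectors for the $\Z$-invariant form, and the Cayley substitution with the principal-minor characterization of positive semidefiniteness. Two small points are left loose: in Step~2 your ``(ii)'' is condition (ii) of Theorem~\ref{thm:monomial} rather than of the statement, and the ``finite test'' also needs, as the paper notes, that non-negativity on $\mathbb R$ of a polynomial in $\mathbb Q[t]$ is decidable (positive leading coefficient and even multiplicity of every real root).
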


\section*{Acknowledgments}
Zheng Xin is grateful to his supervisor, Prof. Wen Chang, for his academic guidance and support. Zheng Xin also thanks Marlon Pimenta Fonseca for patiently and thoroughly answering all his questions during his study of \cite{Fon22}.

\section{Derived tameness and tree algebras}\label{sec:prelim}

Throughout this paper, $\kk$ denotes an algebraically closed field and $D = \operatorname{Hom}_k(-, k)$ is the standard duality. Unless otherwise stated, all algebras are assumed to be finite-dimensional, basic, and connected. We work with finitely generated right modules; for an algebra $A$, we denote by $\operatorname{mod} A$ the category of finitely generated right $A$-modules, by $D^b(A)=D^b(\operatorname{mod} A)$ its bounded derived category, and write $K^{-,b}(\operatorname{proj} A)$ for the homotopy category of right-bounded complexes of finitely generated projective $A$-modules with bounded cohomology. Composition of paths in quivers is written from left to right.  The sets of vertices and arrows of
$Q$ are denoted by $Q_0$ and $Q_1$, respectively. Its underlying
undirected graph retains loops and multiple edges.

Let $Q$ be a finite quiver, let $R_Q \subseteq \kk Q$ be the arrow ideal, and let $I \subseteq \kk Q$ be an admissible ideal ($R_Q^L \subseteq I \subseteq R_Q^2$ for some $L \ge 2$) generated by paths. The algebra $A = \kk Q/I$ is called a \emph{monomial algebra}.

The classes of paths outside $I$, including the trivial paths $e_v$,
form a basis of $A$. We call these classes the \emph{nonzero paths} of
$A$. Every nonzero path has length less than $L$. This basis will be
used in the covering arguments and in the matrices of
Section~\ref{sec:fourier}.

\begin{definition}\label{def:string-gentle}
Let $A=\kk Q/I$ be a monomial algebra.
\begin{enumerate}
    \item We call $A$ a \emph{string algebra} if at most two arrows
    start and at most two arrows end at each vertex, and every arrow
    $\alpha$ has at most one successor $\beta$ with
    $\alpha\beta\notin I$ and at most one predecessor $\gamma$ with
    $\gamma\alpha\notin I$. Only composable pairs of arrows are
    considered; see~\cite{BR87}.

    \item We call $A$ \emph{gentle} if it is a string algebra, $I$ is
    generated by paths of length two, and every arrow has at most one
    successor and at most one predecessor whose composition belongs to
    $I$.

    \item We call $A$ \emph{triangular} if $Q$ has no oriented cycles.

    \item We call $A$ a \emph{tree algebra} if the underlying undirected graph of $Q$ is a tree.
\end{enumerate}
\end{definition}

\begin{definition}\label{def:derived-representation-type}
\cite[Definition~1.2]{BD03} Let $A = \kk Q/I$ be a finite-dimensional algebra.
\begin{enumerate}
    \item The \emph{vector rank} of a bounded complex of projective $A$-modules $P_\bullet$ is the sequence $\mathbf{r}_\bullet(P_\bullet) = (\mathbf{r}(P_n))_{n\in\mathbb{Z}}$, where $\mathbf{r}(P_n) = (r_v)_{v\in Q_0}$ records the multiplicity of each indecomposable projective $e_v A$ in $P_n$.

    \item A \emph{rational family} of minimal complexes over $A$ is a bounded complex $(P_\bullet, d_\bullet)$ of projective $(A \otimes_\kk \Lambda)$-modules, where $\Lambda = \kk[t, f(t)^{-1}]$ ($f \neq 0$), such that $\operatorname{Im} d_n \subseteq (\operatorname{rad} A) P_{n-1}$. Its \emph{specialization} at $(m, \lambda)$ (for $m \ge 1$ and $f(\lambda) \neq 0$) is the complex of $A$-modules
    \[
    P_\bullet(m, \lambda) = P_\bullet \otimes_\Lambda \frac{k[t]}{(t-\lambda)^m}.
    \]

    \item The algebra $A$ is \emph{derived tame} if, for each vector rank $\mathbf{r}_\bullet$, all indecomposable minimal complexes of rank $\mathbf{r}_\bullet$, except for finitely many isomorphism classes, are isomorphic to $P_\bullet(m, \lambda)$ for some $m, \lambda$ and some family $P_\bullet$ belonging to a finite set $\mathfrak{P}(\mathbf{r}_\bullet)$ of rational families.

    \item The algebra $A$ is \emph{derived wild} if there exists a bounded complex $P_\bullet$ of projective $(A \otimes_\kk \Sigma)$-modules, where $\Sigma = \kk\langle x, y\rangle$, such that the functor 
    \[
    - \otimes_\Sigma P_\bullet \colon \operatorname{mod} \Sigma \longrightarrow K^{-,b}(\operatorname{proj} A)
    \]
    preserves indecomposability and reflects isomorphism classes.
\end{enumerate}
\end{definition}

The definition above is
equivalent to the formulation in terms of cohomology dimensions in
\cite{GK02}, see the discussion preceding
\cite[Definition~1.2]{BD03}. We also use the tame--wild dichotomy
\cite[Theorem~1.3]{BD03} and invariance of derived tameness under
derived equivalence \cite[Theorem~5.1]{GK02}.

\begin{definition}\label{def:locally-bounded}
A $\kk$-linear category $R$ is called \emph{locally bounded} if
\begin{enumerate}
    \item distinct objects of $R$ are non-isomorphic;
    \item every endomorphism ring of an object is local;
    \item every morphism space is finite-dimensional; and
    \item each object has nonzero morphisms to or from only finitely
    many objects.
\end{enumerate}
Write $R_0$ for the set of objects. A finite-dimensional $R$-module
has a dimension vector of finite support.
\end{definition}

\begin{definition}\label{def:tame-category}
Let $R$ be a locally bounded $\kk$-linear category. A \emph{one-parameter family} of $R$-modules is a representation $\mathcal{M}$ over a ring $\Lambda = \kk[t, f(t)^{-1}]$, where $f \neq 0$, such that $\mathcal{M}$ has finite support, every $\mathcal{M}(x)$ is a finite free $\Lambda$-module, and all structure maps are $\Lambda$-linear. For $\lambda \in k$ with $f(\lambda) \ne 0$, its \emph{fibre} at $\lambda$ is
\[
\mathcal{M}_\lambda = \mathcal{M} \otimes_\Lambda \Lambda/(t-\lambda).
\]
The dimension vector of the family is the vector of the ranks of its components. We call $R$ \emph{tame} if, for each fixed dimension vector, all but finitely many indecomposable $R$-modules are fibres of finitely many one-parameter families. Representation-finite categories are included in this definition.
\end{definition}

\begin{definition}\label{def:convex}
A full subquiver $Q'$ of $Q$ is called \emph{convex} if every oriented
path in $Q$ whose endpoints lie in $Q'_0$ is entirely contained in
$Q'$, including paths that belong to $I$ (equivalently, paths that are
zero modulo $I$). The associated convex full subcategory has the
induced quiver and the induced relations.
\end{definition}

\begin{lemma}\label{lem:cover}
For a finite-dimensional connected monomial algebra $A=\kk Q/I$,
the universal covering quiver
is a locally finite tree, and its deck group is free of rank
$|Q_1|-|Q_0|+1$. The covering category $\U$ is locally bounded.
Every finite set of its vertices lies in a finite connected convex
full subcategory of finite global dimension.
\end{lemma}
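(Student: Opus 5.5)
The plan is to build the universal cover explicitly. Because $I$ is generated by paths, the covering can be formed from the topological universal cover of the underlying graph of $Q$. Choose a spanning tree $T$ of the underlying graph of $Q$. The fundamental group of a connected graph is free on the $|Q_1|-|Q_0|+1$ edges outside $T$. Let $\widetilde Q$ be the quiver whose underlying graph is the universal covering tree of this graph, with arrows oriented as their images in $Q$. Then $\widetilde Q$ is a tree, and the covering map $\pi\colon\widetilde Q\to Q$ is a local isomorphism of graphs. Since $Q$ is finite, every vertex of $\widetilde Q$ has the same finite degree as its image, so $\widetilde Q$ is locally finite. The deck group is the fundamental group, which is free of rank $|Q_1|-|Q_0|+1$.

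Next define $\widetilde I$ as the ideal generated by all lifts of the monomial generators of $I$, and set $\U=\kk\widetilde Q/\widetilde I$. Because $I$ is monomial, no ambiguity arises: a path in $\widetilde Q$ is nonzero exactly when its image under $\pi$ is a nonzero path of $A$. For the locally bounded conditions, argue as follows.
\begin{itemize}
\item Distinct objects are non-isomorphic, and endomorphism rings are $\kk$, because $\widetilde Q$ is a tree and so has no oriented cycles.
\item Morphism spaces have dimension at most one, because in a tree there is at most one path between two vertices.
\item Each vertex $x$ has nonzero morphisms to or from only finitely many vertices. Nonzero paths have length less than $L$, so any such vertex lies within distance $L-1$ of $x$ in a locally finite tree.
\end{itemize}

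For the last assertion, take a finite vertex set $S$. Let $S'$ be the vertex set of the minimal subtree spanning $S$, enlarged to all vertices lying on paths of $\widetilde Q$ between vertices of $S'$. Here convexity is meant in the sense of Definition~\ref{def:convex}, which counts all oriented paths, including zero ones. In a tree, every oriented path between two vertices $u,v$ is the unique geodesic from $u$ to $v$. So the full subquiver on the vertex set of the minimal subtree $T_S$ containing $S$ is already convex: any path with endpoints in $T_S$ follows the geodesic, which lies inside the subtree. Hence no enlargement is needed. The subtree $T_S$ is finite and connected, and the induced category $B$ is a tree algebra with monomial relations.

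Finite global dimension is the step that needs an actual argument. Since $B$ is a finite quiver without oriented cycles, it is triangular. We can therefore order its vertices so that every arrow goes from smaller to larger index. Then induct on this order, using that $\operatorname{rad} e_xB$ is a module supported on strictly later vertices, to show that each simple has finite projective dimension. One can also cite the standard fact that a finite-dimensional algebra with acyclic quiver has global dimension at most $|Q_0|-1$. I expect this step to be immediate once triangularity is noted, so the main care goes into checking convexity in the strong sense including zero paths, which the tree geodesic argument handles.
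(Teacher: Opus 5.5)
Your proposal is correct and follows essentially the same route as the paper. You identify the cover with the universal cover of the underlying graph, since monomial relations impose no homotopy identifications. You get local boundedness from the bound $L$ on the lengths of nonzero paths in a locally finite tree. You take the subtree spanned by the finite vertex set, which is convex because oriented paths in a tree are geodesics. You obtain finite global dimension by induction along an ordering in which every arrow increases the index. Your extra checks, namely Hom spaces of dimension at most one and endomorphism rings equal to $\kk$, only make explicit what the paper leaves implicit in ``locally bounded''.
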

\begin{proof}
A monomial relation does not identify distinct paths in the
homotopy relation defining $\pi_1(Q,I)$. Thus $\pi_1(Q,I)$ is the
fundamental group of the underlying graph. The universal graph
cover, with the lifted relations, gives the bound-quiver cover
$\U$ \cite[Remark~2.5]{Fon22}. Its deck group $G$ acts freely on
vertices, and $\U/G\cong A$.

Each vertex of the covering tree is incident with finitely many
arrows, and every nonzero path has length less than $L$.
Only finitely many nonzero paths can therefore start or end at a
fixed vertex. Thus $\U$ is locally bounded.

Given a nonempty finite set of vertices, take the union of the
undirected paths joining them. The result is a finite connected
subtree. Every oriented path between its vertices lies in this
subtree, so the corresponding full subcategory $B$ is convex.
Order the vertices so that every arrow goes from a smaller index
to a larger one. The composition factors of
$\operatorname{rad}(e_iB)$ then lie at vertices with larger
indices. Induction, starting at the last vertex, shows that each
simple $B$-module has finite projective dimension.
Hence $B$ has finite global dimension.
\end{proof}

For any finite-dimensional basic algebra $A$, put
$K_0(A)=K_0(\operatorname{mod}A)$. This group is free on the
classes of the simple modules, so $\rank K_0(A)=|Q_0|$.
The class of a bounded complex is the alternating sum of the
classes of its cohomology modules. This identifies $K_0(A)$ with
$K_0(D^b(A))$, so derived equivalence preserves
its rank even when the global dimension is infinite.

For a finite tree algebra $B$, the global dimension is finite.
Its Euler pairing and quadratic form are therefore given by
\[
\langle[X],[Y]\rangle_B
=\sum_{j\geq0}(-1)^j\dim_\kk\Ext_B^j(X,Y),
\qquad \chi_B(x)=\langle x,x\rangle_B.
\]
The sum is finite and defines a bilinear form on $K_0(B)$.
We write $\chi_B\geq0$ if it is non-negative on every element
of $K_0(B)$, including vectors with negative coordinates in the
basis of simple classes. This is equivalent to non-negativity on
$K_0(B)\otimes_\Z\mathbb R$, since homogeneity gives non-negativity
on rational vectors and continuity extends it to real vectors.

For $n\geq1$ and $0\leq m\leq n$, the \emph{semichain algebra}
$S(n,m)$ is the incidence algebra of a poset with $n$ levels.
The first $m$ levels contain two incomparable elements each; the
remaining levels are singletons. Every element of an earlier
level is smaller than every element of a later level. Recall
that the incidence algebra has basis $e_{xy}$ for $x\leq y$, with
$e_{xy}e_{yz}=e_{xz}$ and all other basis products zero.
We use the connected members of this family
\cite[p.~302]{Bru01}, excluding $S(1,1)\cong\kk\times\kk$.

\begin{theorem}\label{thm:tree}
A finite connected tree algebra $B$ is derived tame if and only
if $\chi_B\geq0$. In that case $B$ is derived equivalent to a
semichain algebra, a hereditary algebra of type $E_p$ or
$\widetilde E_p$ with $p=6,7,8$, or a tubular algebra. Semichain
algebras are derived tame.

If $\chi_B\geq0$ and $B$ contains a convex subcategory of one of
these exceptional or tubular derived types, then $B$ itself has
one of these types; in particular, $\rank K_0(B)\leq10$.
\end{theorem}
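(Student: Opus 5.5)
This theorem collects known results, so the plan is to assemble it from the literature rather than prove anything from scratch.

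For the equivalence between derived tameness and $\chi_B\geq0$, I will cite Brüstle \cite{Bru01} and Geiss \cite{Gei02}. Both treat finite tree algebras, which are exactly the setting here. Since $B$ is a tree algebra, its global dimension is finite (the ordering argument in Lemma~\ref{lem:cover} applies to any finite tree quiver). Hence $\chi_B$ is well defined and derived-invariant, and the criterion applies directly.

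\emph{Classification in the non-negative case.} I will follow Brüstle's analysis \cite[p.~302]{Bru01}. If $\chi_B\geq0$, then $B$ is derived equivalent to one of the following:
\begin{itemize}
\item a gentle tree algebra of Dynkin type $A$ or $\widetilde A$, or more generally a semichain algebra;
\item a hereditary algebra of type $E_p$ or $\widetilde E_p$ with $p=6,7,8$;
\item a tubular algebra.
\end{itemize}
Here I must reconcile conventions. Hereditary algebras of types $A_n$, $D_n$, $\widetilde A$, $\widetilde D_n$, and the derived-discrete and gentle cases should all appear among the semichain algebras $S(n,m)$. For example, $S(n,0)$ is the linear $A_n$ incidence algebra. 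The algebra $S(n,m)$ with $m\geq1$ should be derived equivalent to hereditary $D$ type or to $\widetilde A$ and $\widetilde D$ types, via reflection or tilting. I will verify this identification by computing the Coxeter polynomials or Euler forms of $S(n,m)$ and matching them with Brüstle's list.

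\emph{Semichain algebras are derived tame.} I will argue this either by citing Brüstle directly, or by computing $\chi_{S(n,m)}$. The poset is a tree-like ordinal sum, so the incidence algebra has a simple Ext structure. I will check that the form is non-negative and then apply Geiss's criterion. The obstacle is that $S(n,m)$ is not itself a tree algebra when $m\geq2$, because it contains commutative squares. Geiss's theorem therefore does not apply verbatim. Instead I will exhibit a tilting complex to a tree algebra, for instance a gentle one-cycle or $\widetilde D$-type algebra, and conclude by derived invariance \cite[Theorem~5.1]{GK02}. This identification of $S(n,m)$ up to derived equivalence is the step I expect to require the most care.

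\emph{Final assertion.} Suppose $\chi_B\geq0$ and $B$ contains a convex subcategory $C$ of exceptional ($E_p$, $\widetilde E_p$) or tubular derived type. Restricting to a convex subcategory gives $\chi_C=\chi_B|_{K_0(C)}$, because Ext groups between $C$-modules computed in $C$ and in $B$ agree by convexity. So $\chi_C$ has the corresponding corank and root system. Semichain forms have only $A$ or $D$ type radical quotients, and they cannot contain an $E_6$ subform with non-negativity. Hence $B$ is not of semichain type, and by the classification $B$ must be $E_p$, $\widetilde E_p$ or tubular. The rank bound $\rank K_0(B)\leq10$ then follows, since tubular algebras have rank at most $10$ (type $(2,3,6)$) and $\widetilde E_8$ has rank $9$. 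The key input is that a non-negative integral form containing an $E$-type subform has bounded rank. By Ovsienko-type bounds, weakly non-negative unit forms of corank at most $2$ extending $E_8$ stop at rank $10$. I will cite this from \cite{Bru01} or Geiss.
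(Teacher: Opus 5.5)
Your handling of the equivalence and the classification is the same as the paper's: cite Br\"ustle, and Geiss. The gap is in the last assertion, where you replace the paper's citation of \cite[Proposition~2.1]{BBD00} by a quadratic-form argument.

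\textbf{The extension property.} Your argument is fine for the $E$-types. Since $\chi_B\geq0$, one has $\operatorname{rad}\chi_C=\operatorname{rad}\chi_B\cap K_0(C)$. Hence the reduced lattice of $C$ embeds isometrically into that of $B$, and an $E_6$, $E_7$ or $E_8$ root lattice cannot lie inside one of type $A$ or $D$.

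But a tubular algebra of type $(2,2,2,2)$ has Euler form of corank $2$ and Dynkin type $D_4$. Your argument says nothing about this case, and no argument using only $\chi_C=\chi_B|_{K_0(C)}$ can handle it:
\begin{itemize}
\item From the M\"obius function, $\chi_{S(n,m)}$ is a sum of $n+1$ independent squares in $n+m$ variables, so it has corank $m-1$ for $m\geq1$.
\item In particular $S(3,3)$ has rank $6$, corank $2$ and Dynkin type $D_4$. It even has the same Coxeter polynomial $(x-1)^2(x+1)^4$ as a tubular algebra of type $(2,2,2,2)$.
\item $S(3,3)$ is a convex subcategory (the first three levels) of every $S(n,m)$ with $n\geq m\geq3$.
\end{itemize}
So your step ``hence $B$ is not of semichain type'' fails when $C$ is of type $(2,2,2,2)$. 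The missing input must come from one of two places. One is the derived classification itself, either separating this tubular class from $S(3,3)$ or showing it does not occur for tree algebras. The other is \cite[Proposition~2.1]{BBD00}, which is what the paper cites.

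Once $B$ is known to be of type $E_p$, $\widetilde E_p$ or tubular, $\rank K_0(B)\leq10$ is read off from the list, as in the paper. The appeal to Ovsienko-type bounds is unnecessary. At the level of forms alone the corank, hence the rank, is not bounded. It is the classification that forces corank at most $2$ in the $E$-types.

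\textbf{Semichain algebras.} Your ``reconciliation of conventions'' is wrong. By the corank computation above, $S(n,m)$ is derived equivalent to no hereditary algebra once $m\geq3$. Connected hereditary algebras with non-negative Euler form have corank at most $1$. The semichain family is needed precisely for these classes of large corank. Consequently:
\begin{itemize}
\item Comparing Coxeter polynomials will not identify $S(n,m)$ with types $D$, $\widetilde A$ or $\widetilde D$ in general.
\item Tilting $S(n,m)$ to ``a gentle one-cycle or $\widetilde D$-type'' algebra cannot work. A one-cycle gentle algebra is not a tree algebra, so the tree criterion does not apply to it. The hereditary targets cover only $m\leq2$.
\end{itemize}
Of the routes you offer for ``semichain algebras are derived tame'', only the direct citation of Br\"ustle survives, and that is the paper's route.
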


The classification is due to Br\"ustle
\cite[Theorems~1.1 and~1.2]{Bru01}. The assertion about convex
subcategories follows from \cite[Proposition~2.1]{BBD00}.
The hereditary types have at most nine vertices, and the tubular
types have Grothendieck group ranks $6,8,9,10$. These ranks give
the bound used in Proposition~\ref{prop:semichain}.

\section{A covering criterion for derived tameness}
\label{sec:covering}

For a locally bounded category $R$, the \emph{repetitive category}
$\widehat R$ has objects $(x,i)\in R_0\times\Z$ and morphism spaces
\[
\widehat R((x,i),(y,j))=
\begin{cases}
R(x,y),&j=i,\\
DR(y,x),&j=i+1,\\
0,&\text{otherwise}.
\end{cases}
\]
For a bound-quiver category, $R(x,y)$ is the space of paths from
$x$ to $y$ modulo the relations. Each layer is a copy of $R$.
Composition within a layer is that of $R$, and composition between
adjacent layers uses the natural bimodule structure on $DR$.
The product of two morphisms in the dual spaces is zero.

The \emph{trivial extension} of a finite-dimensional algebra $A$ is
$\mathrm T(A)=A\ltimes DA$. Its underlying vector space is $A\oplus DA$,
with multiplication
\[
(a,f)(b,g)=(ab,ag+fb).
\]
The layer translation $\nu(x,i)=(x,i+1)$ gives
$\widehat A/\langle\nu\rangle\cong\mathrm T(A)$.

The next lemma uses finite quotient categories to study modules
with fixed support in a repetitive category.

\begin{lemma}\label{lem:repetitive-local}
Let $R$ be locally bounded. Suppose that every finite subset of
$R_0$ is contained in a finite full subcategory $B$ for which
$\widehat B$ is tame. Then $\widehat R$ is locally bounded and tame.
\end{lemma}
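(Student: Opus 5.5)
Local boundedness of $\widehat R$ follows directly from that of $R$. Objects $(x,i)$ are pairwise non-isomorphic: morphisms only go from layer $i$ to layers $i$ and $i+1$, and $R$ has non-isomorphic objects. The endomorphism ring of $(x,i)$ is $R(x,x)$, which is local. Morphism spaces are $R(x,y)$ or $DR(y,x)$, hence finite-dimensional. Finally, $(x,i)$ has nonzero morphisms only to or from objects $(y,j)$ with $|i-j|\le1$ for which $R(x,y)$ or $R(y,x)$ is nonzero, and there are finitely many such $y$.

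For tameness, I fix a dimension vector $d$ of $\widehat R$. Its support is finite, so it lies in $S\times\{i_0,\dots,i_1\}$ for a finite $S\subseteq R_0$. The idea is to enlarge $S$ so that every indecomposable $\widehat R$-module of dimension vector $d$ is really a module over a finite full subcategory of $\widehat B$ with $\widehat B$ tame. I set $S'$ to be the set of objects having a nonzero morphism to or from some object of $S$; by local boundedness $S'$ is finite. By hypothesis there is a finite full subcategory $B$ with $S'\subseteq B_0$ and $\widehat B$ tame.

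Let $C$ be the full subcategory of $\widehat R$ on $S\times\{i_0,\dots,i_1\}$. A module with support there is the same as a module over $C$ (extension by zero), and indecomposability and isomorphism are detected within $C$-modules. Likewise, let $C'$ be the full subcategory of $\widehat B$ on the same objects. The crucial comparison is that $C'=C$ as $\kk$-linear categories. For $x,y\in S$ we need $B(x,y)=R(x,y)$ and $DB(y,x)=DR(y,x)$ as spaces, with compatible composition. Since $B$ is full in $R$, the morphism spaces agree. Composition within $C$ of morphisms between objects of $S$ uses only these spaces and the bimodule action of $R(x,y)$ on $DR(z,w)$ with all of $x,y,z,w\in S$. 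That action is computed from compositions in $R$ of morphisms between objects of $S$, and those agree with compositions in $B$ because $B$ is full. So in fact no enlargement to $S'$ is needed; it suffices that $S\subseteq B_0$. I keep $S'$ only as a safety margin.

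The one subtle point, and the main obstacle, is that the dual spaces in $\widehat B$ are $DB(y,x)$, which coincide with $DR(y,x)$ only because $B$ is a \emph{full} subcategory. In $\widehat B$ the dualized morphism spaces involve the whole of $B(y,x)$, and fullness guarantees this equals $R(y,x)$. I therefore need to check that the action $R(x,y)\otimes DR(z,x)\to DR(z,y)$, defined by $(af)(b)=f(ba)$, restricts correctly; this holds because it involves only $R(z,y)$ and $R(z,x)$ for objects of $S$. Granting this, $C\cong C'$, so modules of $\widehat R$ with dimension vector $d$ correspond bijectively, preserving indecomposability and isomorphism classes, to modules of $\widehat B$ with dimension vector $d$, namely those supported on $C'$.

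Tameness of $\widehat B$ then says that all but finitely many of these indecomposables are fibres of finitely many one-parameter families $\mathcal M$ over $\widehat B$ of dimension vector $d$. Each family has support contained in the support of $d$, since ranks are constant, and is therefore a family over $C'=C$. Extending it by zero gives a one-parameter family of $\widehat R$-modules whose fibres correspond to the original ones. Hence $\widehat R$ is tame.
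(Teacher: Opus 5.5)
There is a genuine gap at the sentence ``A module with support there is the same as a module over $C$ (extension by zero).'' The real subtlety is not the dual spaces, which fullness of $B$ handles, but this step. Extending a $C$-module by zero gives a functor on $\widehat R$ only if the module kills every morphism of $C$ that factors through an object outside $C_0$. So $\widehat R$-modules supported on $C_0$ are modules over $C/J_R$, where $J_R$ is the ideal of $C$ generated by such composites. Likewise, $\widehat B$-modules supported on $C_0$ are $C/J_B$-modules. Fullness of $\widehat B$ in $\widehat R$ only gives $J_B\subseteq J_R$.

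For example, let $R$ be the path category of $1\to2\to3$. Let $d$ be $1$ at $(1,0)$ and $(3,0)$ and $0$ elsewhere. Take $B$ the full subcategory on $\{1,3\}$, so that $S\subseteq B_0$ and $\widehat B$ is tame. The nonzero morphism $(1,0)\to(3,0)$ factors through $(2,0)$ in $\widehat R$, but through no object outside $C_0$ in $\widehat B$. Hence $\widehat B$ has an indecomposable module of dimension vector $d$ on which this morphism acts invertibly, and it is not an $\widehat R$-module. So your claim that no enlargement is needed is false. The bijection you assert fails, and so does extending the families by zero in your last paragraph.

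The enlargement to $S'$ repairs this, but you must actually use it. A nonzero morphism out of $(x,i)$ with $x\in S$ ends at some $(y,j)$ with $j\in\{i,i+1\}$ and $R(x,y)\neq0$ or $R(y,x)\neq0$, so $y\in S'\subseteq B_0$. Thus every nonzero composite $a\to c\to b$ with $a,b\in C_0$ and $c\notin C_0$ already lives in $\widehat B$. This gives $J_R=J_B$, and then $\operatorname{mod}(C/J_R)$ provides your bijection and the transfer of families.

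The corrected argument is a valid alternative to the paper's. The paper chooses $B$ containing only the projection of $S$ and accepts a possibly strict inclusion $J_B\subseteq J_R$. It then passes tameness from $C/J_B$ to its quotient $C/J_R$. A family with infinitely many fibres annihilated by $J_R/J_B$ is annihilated identically, since its matrix entries lie in $\Lambda=\kk[t,f(t)^{-1}]$; any other family contributes only finitely many $C/J_R$-modules. Your route applies the hypothesis to a larger finite set but avoids that quotient step.
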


\begin{proof}
The displayed morphism spaces show that $\widehat R$ is locally
bounded. Fix a finite set $S\subseteq\widehat R_0$ and choose
$B$ containing the projection of $S$ to $R_0$.
Since $B$ is full, $\widehat B$ is the full subcategory of
$\widehat R$ on $B_0\times\Z$. Both categories have the same
full subcategory $C$ on $S$.

Let $J_B$ be the ideal of $C$ generated by morphisms that factor
through an object of $\widehat B$ outside $S$. Define $J_R$ in the
same way using $\widehat R$. Every factorization in $\widehat B$
is also a factorization in $\widehat R$, so
\[
J_B\subseteq J_R,
\qquad C/J_B\twoheadrightarrow C/J_R.
\]
The ideal $J_B$ acts as zero on every $\widehat B$-module
supported on $S$. Conversely, a $C/J_B$-module extends by zero
to $\widehat B$, since compositions through objects outside $S$
act as zero by the definition of $J_B$. Restriction and
extension by zero therefore give an equivalence
\[
\operatorname{mod}(C/J_B)\simeq
\{M\in\operatorname{mod}\widehat B:
\operatorname{supp}M\subseteq S\}.
\]
It preserves dimensions, isomorphism classes and indecomposability.
The same statement holds for $J_R$ and $\widehat R$.

For a one-parameter family with dimension vector supported on
$S$, each component outside $S$ has free rank zero and hence
vanishes. The action therefore factors through $C/J_B$.
Restriction and extension by zero commute with tensoring over
$\Lambda$, so the equivalence also applies to families and their
specializations.

For a fixed total dimension, there are only finitely many
dimension vectors on $S$. Since $\widehat B$ is tame,
the equivalence gives finitely many one-parameter families,
with finitely many exceptions, for the indecomposable
$C/J_B$-modules of that dimension. Thus $C/J_B$ is tame.
To pass to $C/J_R$, choose $\Lambda$-bases for each family.
If infinitely many fibres are annihilated by $J_R/J_B$, every
matrix entry describing its action vanishes at infinitely many
allowed parameter values. These entries lie in
$\Lambda=\kk[t,f(t)^{-1}]$, so they vanish identically and the
family is a $C/J_R$-family. Each remaining family contributes
only finitely many $C/J_R$-modules, which are added to the finite
exceptional set. Hence $C/J_R$ is tame.
Applying this argument to the support of each dimension vector
of $\widehat R$ proves that $\widehat R$ is tame.
\end{proof}

We use two comparison theorems. Let $R\to R/G$ be a Galois
covering of locally bounded categories, with $G$ torsion-free.
Then
\[
R\text{ is tame}\quad\Longleftrightarrow\quad R/G\text{ is tame}
\]
by \cite[Corollary~10.7\textnormal{(1)}]{Dro01}; see also
\cite[Theorem~9.1\textnormal{(1)}]{DO00}.

Geiss and Krause \cite[Theorem~5.3]{GK02} proved that
\[
\widehat A\text{ tame}\quad\Longrightarrow\quad A\text{ derived tame}.
\]
The converse holds when $\gldim A<\infty$.

\begin{lemma}\label{lem:repetitive-cover}
The universal covering $F:\U\to A$ of a finite-dimensional monomial
algebra induces a Galois covering
\[
\widehat F:\widehat\U\longrightarrow\widehat A,
\qquad \widehat\U/G\cong\widehat A,
\]
where $g(x,i)=(gx,i)$ for $g\in G$.
\end{lemma}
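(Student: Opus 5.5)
We define $\widehat F$ directly from $F$ and check the Galois covering axioms layer by layer. On objects put $\widehat F(x,i)=(Fx,i)$. On morphisms inside a layer ($j=i$) use $F$ itself, $\U(x,y)\to A(Fx,Fy)$. Between adjacent layers ($j=i+1$) we need a map $D\U(y,x)\to DA(Fy,Fx)$. The natural choice is the dual of the ``fibre sum'' isomorphism
\[
\bigoplus_{g\in G}\U(y,gx)\xrightarrow{\ \sim\ }A(Fy,Fx),
\]
restricted to the summand $g=1$. Concretely, the basis of nonzero paths of $\U$ from $y$ to $x$ maps bijectively onto the nonzero paths of $A$ from $Fy$ to $Fx$ that lift to paths starting at $y$ and ending in $x$. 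A functional on $\U(y,x)$ is extended by zero on the paths lifting to the other $gx$. Since $\U$ is locally bounded (Lemma~\ref{lem:cover}), all sums involved are finite.

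Next we define the $G$-action $g(x,i)=(gx,i)$. On morphisms, $g$ acts within a layer as the deck transformation. Between layers it acts by transport, $D\U(y,x)\to D\U(gy,gx)$, $\varphi\mapsto\varphi\circ g^{-1}$. This is a functorial action because $G$ acts on $\U$ by automorphisms compatible with the relations, and hence with the bimodule structure on $D\U$. The action is free on objects, since $G$ acts freely on the vertices of $\U$ and the layer index is preserved.

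The main point is the covering isomorphism. For all objects $(x,i),(y,j)$, the induced map
\[
\bigoplus_{g\in G}\widehat\U\bigl((x,i),(gy,j)\bigr)\longrightarrow \widehat A\bigl((Fx,i),(Fy,j)\bigr)
\]
must be bijective, and similarly with $g$ acting on the source. For $j=i$ this is exactly the covering property of $F$. For $j=i+1$ the left side is $\bigoplus_g D\U(gy,x)$ and the right side is $DA(Fy,Fx)$. Because the nonzero-path bases are compatible, dualizing the fibre-sum decomposition of $A(Fy,Fx)$ gives a finite direct sum of duals. This matches the left side after reindexing, using $D\U(gy,x)\cong D\U(y,g^{-1}x)$. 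In all other cases both sides are zero. Finally, functoriality reduces to two facts. Compositions of a path with a functional correspond under $F$, since $F$ respects multiplication of nonzero paths. Products of two dual morphisms vanish on both sides.

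The resulting functor is $G$-invariant, i.e.\ $\widehat F\circ g=\widehat F$, and it is bijective on $G$-orbits of objects. Hence it induces an isomorphism $\widehat\U/G\cong\widehat A$. The step needing the most care is compatibility of the bimodule actions on the dual layer, namely that $\widehat F$ respects $a\cdot\varphi\cdot b$. I would verify this on the path bases: a product is nonzero in $\U$ exactly when its image is nonzero in $A$, because $I$ is monomial and its relations lift along the tree cover.
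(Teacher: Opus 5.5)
Your proposal is correct and is essentially the paper's argument. You define $\widehat F$ by $F$ on each layer and by $p^\vee\mapsto F(p)^\vee$ between adjacent layers, check composition on the nonzero-path bases using monomiality, and obtain the covering isomorphisms between layers by dualizing the fibre-sum decompositions of $A(Fy,Fx)$; your explicit $G$-action on dual morphisms and the check of $G$-invariance fill in details the paper leaves implicit. In the bimodule check, the fact doing the work is that a factorization of $F(p)$ in $A$ lifts to a factorization of $p$ in $\U$ (unique path lifting, as in the paper); this follows from the fibre-sum bijection you already use, not from reflection of nonzero products alone.
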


\begin{proof}
Nonzero paths form bases of the morphism spaces of $A$ and $\U$.
On each layer define $\widehat F$ by $F$. On adjacent layers, send
the functional dual to a lifted path $p$ to the functional dual
to $F(p)$.

Since the algebras are monomial, a product of two nonzero paths
is either another nonzero path or zero. Write $p^\vee$ for the
functional dual to a nonzero path $p$. With our convention on
path multiplication, the dual bimodule actions are
\[
q\cdot p^\vee=
\begin{cases}(p')^\vee,&p=p'q,\\0,&\text{otherwise},\end{cases}
\qquad
p^\vee\cdot r=
\begin{cases}(p'')^\vee,&p=rp'',\\0,&\text{otherwise}.
\end{cases}
\]
The remaining paths $p'$ and $p''$ may be trivial.
Each path has a unique lift once its starting vertex is fixed,
and its subpaths lift with it, so these products agree under $F$.
Products of two dual morphisms are zero in both repetitive
categories. Hence $\widehat F$ preserves composition.

For lifts $x,y$ of $a,b$, respectively, path lifting gives
\[
\bigoplus_{g\in G}\U(x,gy)\cong A(a,b),
\qquad
\bigoplus_{g\in G}\U(gx,y)\cong A(a,b).
\]
Only finitely many summands are nonzero, by local boundedness.
Dualizing these isomorphisms with $a$ and $b$ interchanged gives
the covering isomorphisms between adjacent layers.
For other pairs of layers both sides are zero.
The group acts freely on objects, and its orbits are indexed by
$(a,i)\in Q_0\times\Z$. Hence $\widehat F$ is a Galois
covering and $\widehat\U/G\cong\widehat A$.
\end{proof}

\begin{proof}[Proof of Theorem~\ref{thm:monomial}]
\textnormal{(i)$\Rightarrow$(ii).}
Suppose that \textnormal{(ii)} fails for some $B$.
By Theorem~\ref{thm:tree} and the tame--wild dichotomy, $B$ is
derived wild.
Derived wildness passes from a finite full subcategory of the
monomial universal cover to $A$
\cite[Lemma~2.8\textnormal{(a)}]{Fon22}.
Thus $A$ is derived wild, contrary to \textnormal{(i)}.

\textnormal{(ii)$\Rightarrow$(iii).}
Let $S$ be a finite set of vertices of $\U$.
By Lemma~\ref{lem:cover}, it lies in a finite connected convex tree
subcategory $B$ of finite global dimension.
The assumed Euler condition and Theorem~\ref{thm:tree}
show that $B$ is derived tame. Since $B$ has finite global dimension,
the Geiss--Krause theorem shows that $\widehat B$ is tame.
Lemma~\ref{lem:repetitive-local} gives tameness of $\widehat\U$.

\textnormal{(iii)$\Leftrightarrow$(iv).}
The deck group $G$ is free, hence torsion-free. The equivalence
follows from the covering theorem and Lemma~\ref{lem:repetitive-cover}.

\textnormal{(iv)$\Rightarrow$(i).}
This is the Geiss--Krause implication stated above.

\textnormal{(iv)$\Leftrightarrow$(v).}
The layer translation acts freely, and its group is isomorphic to
$\Z$. Apply the same covering theorem to
$\widehat A/\langle\nu\rangle\cong\mathrm T(A)$.
\end{proof}

\begin{remark}
When $|Q_1|-|Q_0|+1\geq1$, the covering quiver is an infinite
tree. Theorem~\ref{thm:monomial} then involves infinitely many
finite connected convex full subcategories. A single such
subcategory whose Euler form takes a negative value proves
derived wildness, as the next corollary shows. For one-cycle
algebras, Section~\ref{sec:fourier} gives a finite test for
the local Euler condition.
\end{remark}

\begin{corollary}\label{cor:monomial_der_wild}
Let $A=\kk Q/I$ be a finite-dimensional connected monomial
algebra. The following conditions are equivalent:
\begin{enumerate}[label=\textnormal{(\roman*)}]
\item $A$ is derived wild;
\item there is a finite connected convex full subcategory
$B\subseteq\U$ whose Euler form takes a negative value;
\item there is a finite connected convex full subcategory
$B\subseteq\U$ that is derived wild and whose proper connected
convex full subcategories are all derived tame.
\end{enumerate}
\end{corollary}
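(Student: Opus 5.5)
The plan is to prove the cycle (i)$\Rightarrow$(ii)$\Rightarrow$(iii)$\Rightarrow$(i), using Theorem~\ref{thm:monomial}, Theorem~\ref{thm:tree} and the Birge–Drozd tame--wild dichotomy \cite[Theorem~1.3]{BD03}.

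For (i)$\Rightarrow$(ii), I assume $A$ is derived wild. By the dichotomy $A$ is not derived tame, so condition (ii) of Theorem~\ref{thm:monomial} fails. This directly gives a finite connected convex full subcategory $B\subseteq\U$ with $\chi_B(x)<0$ for some $x$.

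For (ii)$\Rightarrow$(iii), I take such a $B$ and use minimality. Among all finite connected convex full subcategories of $B$ whose Euler form takes a negative value, I choose one, $B'$, with the fewest vertices; this set is nonempty because $B$ belongs to it. Two facts are needed about $B'$.
\begin{enumerate}
\item $B'$ is again a finite connected convex full subcategory of $\U$, since a convex subcategory of a convex subcategory is convex in $\U$. It is a finite tree algebra, because its quiver is a finite connected subgraph of the covering tree of Lemma~\ref{lem:cover}.
\item Every proper connected convex full subcategory $B''\subsetneq B'$ is also a tree algebra with fewer vertices, so minimality gives $\chi_{B''}\ge0$.
\end{enumerate}
Theorem~\ref{thm:tree} then shows that each such $B''$ is derived tame. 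Since $\chi_{B'}$ takes a negative value, Theorem~\ref{thm:tree} shows $B'$ is not derived tame, and the dichotomy makes it derived wild. So $B'$ witnesses (iii).

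For (iii)$\Rightarrow$(i), let $B$ be the subcategory given by (iii); it is derived wild. Derived wildness passes from a finite full subcategory of the monomial universal cover to $A$ by \cite[Lemma~2.8(a)]{Fon22}, exactly as in the proof of (i)$\Rightarrow$(ii) of Theorem~\ref{thm:monomial}. Hence $A$ is derived wild.

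The main point to check is the convexity bookkeeping in (ii)$\Rightarrow$(iii). Transitivity of convexity is immediate from Definition~\ref{def:convex}, since any oriented path of $\U$ with endpoints in $B''$ lies in $B'$ and hence in $B''$. I would also note explicitly that the Euler form of a convex subcategory is computed intrinsically, so that Theorem~\ref{thm:tree} applies to $B''$ without change. Otherwise every step is a direct application of results already stated.
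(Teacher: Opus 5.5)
Your proposal is correct and follows essentially the paper's argument. It uses Theorem~\ref{thm:monomial} with the dichotomy for (i)$\Leftrightarrow$(ii), a minimal-vertex choice justified by transitivity of convexity, and Theorem~\ref{thm:tree} to translate between a negative Euler form and derived wildness; the only differences are cosmetic: you minimize over subcategories of $B$ with negative Euler form rather than over derived wild subcategories of $\U$ (equivalent by Theorem~\ref{thm:tree}), and you close (iii)$\Rightarrow$(i) directly with \cite[Lemma~2.8(a)]{Fon22} instead of passing through (ii).
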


\begin{proof}
Theorem~\ref{thm:monomial} and the derived tame--wild dichotomy
give \textnormal{(i)$\Leftrightarrow$(ii)}. If \textnormal{(ii)}
holds, then Theorem~\ref{thm:tree} and the dichotomy give a
finite connected convex full subcategory that is derived wild.
Choose one with the least number of vertices. Each of its proper
connected convex full subcategories is convex in $\U$ and has
fewer vertices, so it is derived tame. This proves
\textnormal{(iii)}. Conversely, Theorem~\ref{thm:tree} shows
that the Euler form of the subcategory in \textnormal{(iii)}
takes a negative value, giving \textnormal{(ii)}.
\end{proof}

\begin{example}\label{ex:five-arm-star}
Let $A=\kk Q/\langle\alpha\beta\alpha\rangle$, where $Q$ is
the quiver
\begin{center}
\begin{tikzpicture}[quiver,scale=0.95]
  \begin{scope}[every node/.style={quiver vertex}]
    \node (1) at (-2,0) {$1$};
    \node (0) at (0,0) {$0$};
    \node (2) at (2.2, 1.3) {$2$};
    \node (3) at (2.5, 0.4) {$3$};
    \node (4) at (2.5, -0.4) {$4$};
    \node (5) at (2.2, -1.3) {$5$};
  \end{scope}

  \draw[->] (0) to[bend right=25] node[quiver label,above] {$\alpha$} (1);
  \draw[->] (1) to[bend right=25] node[quiver label,below] {$\beta$} (0);

  \draw[->] (0) -- node[quiver label,above left, inner sep=1pt] {$\gamma_1$} (2);
  \draw[->] (0) -- node[quiver label,above, inner sep=1pt] {$\gamma_2$} (3);
  \draw[->] (0) -- node[quiver label,below, inner sep=1pt] {$\gamma_3$} (4);
  \draw[->] (0) -- node[quiver label,below left, inner sep=1pt] {$\gamma_4$} (5);
\end{tikzpicture}
\end{center}
Every path of length five contains $\alpha\beta\alpha$ as a
subpath, so $A$ is finite-dimensional. Its underlying graph has
six vertices and six edges, hence $A$ is one-cycle.

Choose a lift $\tilde{0}$ of vertex $0$. The lift of $\beta$
ending at $\tilde{0}$ and the four lifts of $\gamma_i$ starting
there form the following subtree of the universal cover:
\begin{center}
\begin{tikzpicture}[quiver,scale=0.95]
  \begin{scope}[every node/.style={quiver vertex}]
    \node (1) at (-2,0) {$\tilde{1}$};
    \node (0) at (0,0) {$\tilde{0}$};
    \node (2) at (2.2, 1.3) {$\tilde{2}$};
    \node (3) at (2.5, 0.4) {$\tilde{3}$};
    \node (4) at (2.5, -0.4) {$\tilde{4}$};
    \node (5) at (2.2, -1.3) {$\tilde{5}$};
  \end{scope}

  \draw[->] (1) -- node[quiver label,above] {$\tilde{\beta}$} (0);
  \draw[->] (0) -- node[quiver label,above left, inner sep=1pt] {$\tilde{\gamma}_1$} (2);
  \draw[->] (0) -- node[quiver label,above, inner sep=1pt] {$\tilde{\gamma}_2$} (3);
  \draw[->] (0) -- node[quiver label,below, inner sep=1pt] {$\tilde{\gamma}_3$} (4);
  \draw[->] (0) -- node[quiver label,below left, inner sep=1pt] {$\tilde{\gamma}_4$} (5);
\end{tikzpicture}
\end{center}
It defines a finite connected convex full subcategory $B$.
No lifted relation is contained in this subtree, so $B$ is the
path algebra of the displayed five-armed star.
Let $x\in K_0(B)$ have coordinate $2$ at the centre and $1$
at each leaf. Since $B$ is hereditary,
\[
\chi_B(x)=2^2+5\cdot1^2-5\cdot2\cdot1=-1.
\]
Thus $A$ is derived wild by
Corollary~\ref{cor:monomial_der_wild}.
\end{example}

When the underlying graph has a cycle, the next proposition adds
a semichain condition to the equivalent conditions of
Theorem~\ref{thm:monomial}.

\begin{proposition}\label{prop:semichain}
Let $A=\kk Q/I$ be a finite-dimensional connected monomial algebra
with $|Q_1|-|Q_0|+1\geq1$.
Every finite connected convex full subcategory of $\U$ has
non-negative Euler form if and only if every such subcategory is
derived equivalent to a semichain algebra.
\end{proposition}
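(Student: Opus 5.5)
The plan is to prove the two implications separately, using Theorem~\ref{thm:tree} in both directions and using the hypothesis $|Q_1|-|Q_0|+1\geq1$ only for the harder one.

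\emph{Semichain $\Rightarrow$ Euler form non-negative.} Let $B\subseteq\U$ be a finite connected convex full subcategory, and suppose $B$ is derived equivalent to a semichain algebra. By Theorem~\ref{thm:tree}, semichain algebras are derived tame. Derived tameness is invariant under derived equivalence \cite[Theorem~5.1]{GK02}, so $B$ is derived tame. Since $\U$ is a tree, $B$ is a finite connected tree algebra, and Theorem~\ref{thm:tree} then gives $\chi_B\geq0$.

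\emph{Euler form non-negative $\Rightarrow$ semichain.} Assume every finite connected convex full subcategory of $\U$ has non-negative Euler form, and fix one such $B$. By Theorem~\ref{thm:tree}, $B$ is derived equivalent either to a semichain algebra or to one of the exceptional types: hereditary of type $E_p$ or $\widetilde E_p$ with $p=6,7,8$, or tubular. We must rule out the exceptional types, and we do so by enlarging $B$.

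Since $|Q_1|-|Q_0|+1\geq1$, Lemma~\ref{lem:cover} says the deck group is a nontrivial free group acting freely on vertices. Hence the covering tree is infinite; it is also connected. Choose a finite set $S$ of vertices containing $B_0$ together with at least eleven further vertices. As in the proof of Lemma~\ref{lem:cover}, take the union of the undirected paths joining the vertices of $S$. This gives a finite connected convex full subcategory $B'\subseteq\U$ with $B\subseteq B'$ and $|B'_0|\geq11$.

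Two facts about $B'$ are needed:
\begin{enumerate}
\item $B$ is convex in $B'$. An oriented path in $B'$ with endpoints in $B_0$ is also an oriented path in $\U$, so it lies in $B$ by convexity of $B$ in $\U$.
\item $\chi_{B'}\geq0$, by hypothesis, since $B'$ is itself a finite connected convex full subcategory of $\U$.
\end{enumerate}
Now suppose $B$ had one of the exceptional or tubular derived types. The second part of Theorem~\ref{thm:tree} would force $B'$ to have one of these types as well, and so $\rank K_0(B')\leq10$. But $\rank K_0(B')=|B'_0|\geq11$, a contradiction. Hence $B$ is derived equivalent to a semichain algebra.

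The main point needing care is that the enlargement $B'$ is simultaneously convex in $\U$, which is needed to apply the hypothesis, and contains $B$ as a convex subcategory, which is needed to apply the Br\"ustle--Bessenrodt-type statement. Both follow from taking $B'$ as the full subcategory on a finite subtree of the covering tree: every oriented path between vertices of a subtree stays inside that subtree.
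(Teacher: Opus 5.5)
Your proof is correct and follows the paper's argument essentially step for step. For the converse you use derived tameness of semichain algebras, its invariance under derived equivalence, and Theorem~\ref{thm:tree}. For the forward direction you enlarge a hypothetical exceptional or tubular subcategory inside the infinite covering tree to a convex subtree with more than ten vertices, which contradicts the rank bound from the extension property. Your explicit checks that $B$ remains convex in $B'$ and that $B$ is a tree algebra are details the paper leaves implicit.
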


\begin{proof}
Assume the local Euler condition. The covering group is infinite,
since it is free of positive rank. Its free action on vertices
implies that the covering tree is infinite.

Suppose that a finite connected convex subcategory $C$ has one of
the exceptional or tubular derived types in Theorem~\ref{thm:tree}.
Choose a finite set $T\supseteq C_0$ of covering vertices with
$|T|>10$ and take the union of the undirected paths joining them.
By Lemma~\ref{lem:cover}, the resulting full subcategory $B$
is finite, connected and convex, contains $C$, and has $|B_0|>10$.
Since $C$ remains convex in $B$ and $\chi_B\geq0$, the extension property in
Theorem~\ref{thm:tree} gives $\rank K_0(B)\leq10$.
This contradicts $\rank K_0(B)=|B_0|>10$.
The classification in Theorem~\ref{thm:tree} now shows that every
finite connected convex subcategory is derived equivalent to a
semichain algebra.

Conversely, suppose every finite connected convex subcategory
is derived equivalent to a semichain algebra. Semichain algebras
are derived tame, and derived equivalence preserves derived
tameness. Theorem~\ref{thm:tree} therefore gives the local
Euler condition.
\end{proof}

\begin{remark}
The rank bound explains the conclusion of
Proposition~\ref{prop:semichain}. The exceptional hereditary
and tubular derived types in Theorem~\ref{thm:tree} have
Grothendieck group rank at most ten. Under the local Euler
condition, the same bound holds for every finite connected
convex full extension of a subcategory of one of these types.
In the infinite covering tree, such extensions have arbitrarily
many vertices. The finite-tree classification therefore leaves
only the semichain derived-equivalence classes.
\end{remark}

\section{A polynomial criterion for one-cycle monomial algebras}\label{sec:fourier}

Throughout this section, $A=\kk Q/I$ is a finite-dimensional
connected one-cycle monomial algebra, with covering group
$G=\langle\tau\rangle\cong\Z$. We use its nonzero paths to form
a matrix that tests the Euler forms of the finite convex
subcategories of $\U$.

Set $q=|Q_0|$ and choose a lift $x_i$ of each vertex $i\in Q_0$.
Every vertex of the cover has a unique expression $\tau^k x_i$.
For $d\in\Z$, define
\[
(D_d)_{ij}=\#\{\text{nonzero paths from }x_i\text{ to }\tau^d x_j\}.
\]
These counts include the trivial paths, so each $D_d$ has integer
entries. Only finitely many $D_d$ are nonzero, since every nonzero
path has length less than $L$ and the covering quiver is locally
finite. Put
\begin{equation}\label{eq:fourier-symbol}
P(z)=\sum_{d\in\Z}D_dz^d,
\qquad
H(z)=\frac12\bigl(P(z)+P(z^{-1})^T\bigr).
\end{equation}
For $|z|=1$, the matrix $H(z)$ is Hermitian, and its positive
semidefiniteness is tested over $\mathbb C$. Its coefficients
are determined by integer path counts, independently of the
characteristic of $\kk$.

The matrices can be computed directly from $Q$ and a finite set
$\mathcal R$ of monomial generators of $I$. A path is nonzero in
$A$ exactly when it contains no member of $\mathcal R$ as a
subpath. Starting from the trivial paths, we form paths of length
$\ell$ by appending arrows to the nonzero paths of length
$\ell-1$ and discarding those that contain a member of
$\mathcal R$. This process lists all nonzero paths by length
and terminates at the first empty list, since $A$ is finite-dimensional.
A one-cycle quiver has an arrow, and $I\subseteq R_Q^2$, so this
first occurs at some length $\ell\geq2$. At that point
$R_Q^\ell\subseteq I$, while the preceding list is nonempty.
Hence $\ell=L$, so the procedure also computes $L$.

\medskip
\noindent\textbf{Spanning-tree lifts.}
Choose a spanning tree $\mathcal T$ of the underlying graph and
a lift $\widetilde{\mathcal T}$ to the covering tree. From now on,
take its vertices as the lifts $x_i$. The unique arrow outside
$\mathcal T$, together with the unique undirected path in
$\mathcal T$ joining its
endpoints, gives a generator of the fundamental group.
Consequently, its lift starting in $\widetilde{\mathcal T}$ ends
in $\tau\widetilde{\mathcal T}$ for a generator $\tau$ of the deck
group. Give this arrow weight $1$ and every tree arrow weight $0$.
A path from $i$ to $j$ of total weight $d$ then lifts from $x_i$
to $\tau^d x_j$.
Thus $(D_d)_{ij}$ counts the nonzero paths from $i$ to $j$
of weight $d$. Every arrow has weight $0$ or $1$.
Since every nonzero path has length less than $L$, these
spanning-tree lifts give $D_d=0$ unless $0\leq d<L$.

\begin{theorem}\label{thm:fourier}
Let $A=\kk Q/I$ be a finite-dimensional connected one-cycle
monomial algebra, with $q=|Q_0|$ and
$L=\min\{\ell\geq2:R_Q^\ell\subseteq I\}$.
Form $H(z)$ as in \eqref{eq:fourier-symbol}, using the
spanning-tree lifts above. Set $M=L-1$ and define
\begin{equation}\label{eq:cayley-matrix}
z(t)=\frac{1+\mathrm i t}{1-\mathrm i t},
\qquad K(t)=(1+t^2)^M H(z(t)),
\end{equation}
and
\begin{equation}\label{eq:coefficient-test}
\det(sI_q+K(t))=s^q+c_1(t)s^{q-1}+\cdots+c_q(t).
\end{equation}
The following conditions are equivalent:
\begin{enumerate}[label=\textnormal{(\roman*)}]
\item every finite connected convex full subcategory of $\U$
has non-negative Euler form;
\item $H(z)\succeq0$ for every $|z|=1$;
\item $c_j(t)\geq0$ for every $t\in\mathbb R$ and $1\leq j\leq q$.
\end{enumerate}
Each $c_j$ lies in $\mathbb Q[t]$, and
$\deg c_j\leq2Mj=2j(L-1)$ when $c_j\ne0$.
These polynomials can be computed from the quiver and its monomial
relations, giving a finite test for the three conditions.
\end{theorem}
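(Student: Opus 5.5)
The plan is to turn condition (i) into a statement about an infinite block Toeplitz matrix built from the Cartan data of $\U$. A Fourier argument then identifies that statement with (ii). Finally a Cayley-type parametrisation of the circle, together with the characteristic polynomial, gives (iii).

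\emph{Step 1: Euler form versus Cartan form.} Let $B\subseteq\U$ be finite, connected and convex. In the basis of classes of indecomposable projectives $[e_xB]$, the Euler pairing is $\langle [e_xB],[e_yB]\rangle_B=\dim_\kk \operatorname{Hom}_B(e_xB,e_yB)$, because higher Ext groups vanish on projectives. This is the number of nonzero paths in $B$ between $x$ and $y$. By convexity (Definition~\ref{def:convex}) it equals the number of nonzero paths in $\U$ between the lifted vertices. Since $B$ is a finite tree algebra of finite global dimension (Lemma~\ref{lem:cover}), the projective classes form a $\Z$-basis of $K_0(B)$, the Cartan matrix being unitriangular. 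Hence $\chi_B\ge0$ if and only if the symmetrised Cartan matrix $\tfrac12(C_B+C_B^T)$ is positive semidefinite. Moreover $C_B$ is the principal submatrix on $B_0$ of the infinite matrix
\[
T_{(i,k),(j,l)}=(D_{l-k})_{ij},\qquad \tau^kx_i\mapsto\tau^lx_j ,
\]
by $G$-invariance of path counts. Every finite vertex set lies in such a $B$ (Lemma~\ref{lem:cover}), and principal submatrices of positive semidefinite matrices are positive semidefinite. So (i) holds exactly when the real quadratic form $y\mapsto y^T\tfrac12(T+T^T)y$ is non-negative on all finitely supported $y=(y_k)_{k\in\Z}$, $y_k\in\mathbb R^q$. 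Since the form is real symmetric, this is equivalent to non-negativity of the Hermitian extension on complex finitely supported vectors.

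\emph{Step 2: Toeplitz--Fourier identification.} For finitely supported $y$, put $\hat y(\theta)=\sum_k y_k e^{\mathrm ik\theta}$. Direct expansion gives
\[
y^*\tfrac12(T+T^T)y=\frac1{2\pi}\int_0^{2\pi}\hat y(\theta)^*H(e^{\mathrm i\theta})\hat y(\theta)\,d\theta ,
\]
up to replacing $z$ by $z^{-1}$ according to the sign convention; I will fix that convention when writing it out. So (ii)$\Rightarrow$(i) is immediate. For the converse, fix $\theta_0$ and $v\in\mathbb C^q$, and take $y_k=(2N+1)^{-1/2}e^{-\mathrm ik\theta_0}v$ for $|k|\le N$. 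The integral then becomes the Fejér mean of $\theta\mapsto v^*H(e^{\mathrm i\theta})v$ at $\theta_0$. This converges to $v^*H(e^{\mathrm i\theta_0})v$ because $H$ is a trigonometric polynomial. Hence (i)$\Rightarrow$(ii). This step, together with getting the index and transpose conventions exactly right, is where I expect the main care to be needed.

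\emph{Step 3: polynomial test.} With spanning-tree lifts, $D_d=0$ unless $0\le d\le M$, so $H(z)=\sum_{|d|\le M}H_dz^d$. Using $1+t^2=(1+\mathrm it)(1-\mathrm it)$, we get
\[
(1+t^2)^Mz(t)^d=(1+\mathrm it)^{M+d}(1-\mathrm it)^{M-d}.
\]
Thus $K(t)$ has polynomial entries of degree at most $2M$ with coefficients in $\mathbb Q(\mathrm i)$, and it is Hermitian for real $t$. The map $t\mapsto z(t)$ covers the unit circle minus $z=-1$, and $(1+t^2)^M>0$. So (ii) is equivalent to $K(t)\succeq0$ for all real $t$, where the point $z=-1$ is recovered by continuity of $H$. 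For a Hermitian matrix with real eigenvalues $\lambda_1,\dots,\lambda_q$, the coefficient $c_j$ is the $j$-th elementary symmetric function of the $\lambda$'s. All $\lambda_r\ge0$ holds if and only if every $c_j\ge0$: if some $\lambda_r<0$, then $\det(sI+K)$ has the root $s=-\lambda_r>0$, which is impossible when all coefficients are non-negative. This gives (ii)$\Leftrightarrow$(iii).

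Each $c_j$ is a sum of $j\times j$ principal minors, so $\deg c_j\le2Mj$. Since $c_j(t)$ is real for all real $t$ and has coefficients in $\mathbb Q(\mathrm i)$, its coefficients are real, hence $c_j\in\mathbb Q[t]$. Computability follows from the path-enumeration procedure described before the theorem, which produces $L$ and all $D_d$. Non-negativity of a univariate rational polynomial on $\mathbb R$ is decidable, for instance by Sturm sequences.
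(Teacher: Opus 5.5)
Your proposal is correct and follows essentially the same route as the paper. Both arguments pass from the Euler form to the symmetrized path-count form: you use the projective basis, the paper uses the congruence $x=P_B^Ty$. Both then reduce to finitely supported vectors on the cover, apply Parseval with a Fej\'er-type limit for the converse, and finish with the Cayley parametrization and characteristic-polynomial coefficients. The $z\leftrightarrow z^{-1}$ convention you flag is harmless: on the unit circle $H(\bar z)=H(z)^T$, and condition (ii) is imposed on the whole circle.
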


\begin{proof}
\textbf{1. Euler forms and path counts.}
Let $B$ be a finite connected convex subcategory of $\U$.
Since its quiver is acyclic, we can order its vertices as
$u_1,\ldots,u_m$ so that every arrow goes from a smaller index
to a larger one. Let $(P_B)_{ab}$ count the nonzero paths from
$u_a$ to $u_b$, and write $S_a$ for the simple at $u_a$.
For the right projective $P_a=e_{u_a}B$, the multiplicity of
$S_b$ is $(P_B)_{ab}$. Thus the Cartan matrix, whose columns are
the projective classes in the basis of simple classes, is
$C_B=P_B^T$. If $E_B$ is the Euler matrix in this basis, then
$\langle[P_a],[S_b]\rangle_B=\delta_{ab}$ gives
$C_B^TE_B=I$, hence $E_B=P_B^{-1}$.

The matrix $P_B$ is upper triangular with diagonal entries $1$, so $P_B^T$ is invertible. For any $y \in \mathbb{R}^m$, set $x = P_B^T y$. Then
\begin{equation}\label{eq:euler-congruence}
\chi_B(x) = \chi_B(P_B^T y) = (P_B^T y)^T P_B^{-1} P_B^T y = y^T P_B P_B^{-1} P_B^T y = y^T P_B^T y = y^T P_B y,
\end{equation}
where $y^T P_B^T y = y^T P_B y$ holds since $y^T P_B^T y$ is a scalar. Since $y \mapsto P_B^T y$ is an automorphism of $\mathbb{R}^m$, $\chi_B(x) \ge 0$ for all $x$ if and only if $y^T P_B y \ge 0$ for all $y \in \mathbb{R}^m$.

For the relation between the Euler and Cartan forms, see
\cite[\S1.6, proof of Lemma~1]{BBD00}.

\noindent\textbf{2. Vectors of finite support.}
Let $y=(y_k)_{k\in\Z}$ be a finitely supported family with
$y_k\in\mathbb R^q$, and set
\[
\Phi(y)=\sum_{k,l\in\Z}y_k^TD_{l-k}y_l.
\]
Assign $(y_k)_i$ to the vertex $\tau^k x_i$.
Applying $\tau^{-k}$ gives a bijection from paths between
$\tau^k x_i$ and $\tau^l x_j$ to paths between $x_i$ and
$\tau^{l-k}x_j$. By Lemma~\ref{lem:cover}, the support of $y$
lies in a finite connected convex subtree, which contains every
path between its vertices. Its path-count form on $y$ is therefore
$\Phi(y)$.

Conversely, every vector on a finite convex subcategory gives
such a family. By \ref{eq:euler-congruence}, the local Euler
condition is equivalent to $\Phi(y)\geq0$ for all finitely
supported real families.

\noindent\textbf{3. The unit-circle condition.} Define $H_d = \frac{1}{2}(D_d + D_{-d}^T)$ so that $H(z) = \sum_{d \in \mathbb{Z}} H_d z^d$. For any complex sequence $y = u + iv$ with finite support, $H_{-d} = H_d^T$ yields $\Psi(y) := \sum_{k,l} y_k^* H_{l-k} y_l = \Phi(u) + \Phi(v)$. Thus, non-negativity over real sequences is equivalent to non-negativity over complex sequences. 

For $f_y(z) = \sum_k y_k z^{-k}$, orthogonality of characters on $S^1$ gives
\begin{equation}\label{eq:parseval_psi}
\Psi(y) = \frac{1}{2\pi} \int_0^{2\pi} f_y(e^{i\theta})^* H(e^{i\theta}) f_y(e^{i\theta}) \, d\theta.
\end{equation}
If $H(z) \ge 0$ for all $z \in S^1$, then $\Psi(y) \ge 0$.

Conversely, assume $\Psi(y) \ge 0$ for all finitely supported sequences. Given any $z_0 \in S^1$ and $v \in \mathbb{C}^q$, define $y_k = z_0^k v$ for $0 \le k < N$ and $y_k = 0$ otherwise. Counting pairs with $l - k = d$ yields:
\[
0 \le \frac{\Psi(y)}{N} = \sum_{|d| < N} \left(1 - \frac{|d|}{N}\right) z_0^d \, v^* H_d v.
\]
Since only finitely many $H_d$ are non-zero, taking $N \to \infty$ gives $v^* H(z_0) v \ge 0$. As $z_0 \in S^1$ and $v \in \mathbb{C}^q$ were arbitrary, $H(z) \ge 0$ on the entire unit circle $S^1$.

\noindent\textbf{4. A finite polynomial test.}
For the spanning-tree lifts, $H_d=0$ when $|d|>L-1$.
With $M=L-1$ and $K(t)$ as in \eqref{eq:cayley-matrix}, the terms
with exponents $d$ and $-d$, for $0\leq d\leq M$, become
\[
(1+t^2)^M z(t)^{\pm d}
=(1+t^2)^{M-d}(1\pm\mathrm i t)^{2d}.
\]
It follows that the entries of $K(t)$ lie in $\mathbb Q(\mathrm i)[t]$
and have degree at most $2M$. For all real $t$, the factor
$(1+t^2)^M$ is positive and $K(t)$ is Hermitian.

In \eqref{eq:coefficient-test}, the coefficient $c_j(t)$ is the
sum of the principal minors
of order $j$ of $K(t)$; equivalently, it is the $j$th elementary
symmetric function of the eigenvalues of $K(t)$. Each entry has
degree at most $2M$,
so $c_j(t)\in\mathbb Q(\mathrm i)[t]$ and
$\deg c_j\leq2Mj$ when $c_j$ is nonzero.
Since $K(t)$ is Hermitian for all real $t$, its eigenvalues and each
$c_j(t)$ are real on $\mathbb R$. The imaginary part of $c_j$
is therefore a polynomial vanishing on $\mathbb R$, so
$c_j\in\mathbb Q[t]$.

For a fixed real $t$, let $\lambda_1,\ldots,\lambda_q$ be the
eigenvalues of $K(t)$. If they are all non-negative, then the
coefficients of $\prod_{a=1}^q(s+\lambda_a)$ are non-negative.
Conversely, if all $c_j(t)$ are non-negative, the right side of
\eqref{eq:coefficient-test} is strictly positive for $s>0$.
A negative eigenvalue $\lambda_a$ would give the positive root
$s=-\lambda_a$, a contradiction. Thus
\[
K(t)\succeq0
\quad\Longleftrightarrow\quad
c_j(t)\geq0\quad(1\leq j\leq q).
\]

As $t$ runs through $\mathbb R$, $z(t)$ covers the unit circle
except for $-1$, and $z(t)\to-1$ as $t\to+\infty$.
If $H(z(t))\succeq0$ for all real $t$, then for every
$v\in\mathbb C^q$, continuity gives
\[
v^*H(-1)v=\lim_{t\to+\infty}v^*H(z(t))v\geq0.
\]
Consequently,
\[
H(z)\succeq0\text{ for all }|z|=1
\quad\Longleftrightarrow\quad
c_j(t)\geq0\text{ for all }t\in\mathbb R\text{ and }1\leq j\leq q.
\]

A nonzero real polynomial is non-negative on $\mathbb R$
exactly when its leading coefficient is positive and every real
root has even multiplicity. Indeed, an odd-multiplicity real root
changes the sign, while an even-multiplicity root does not.
The zero polynomial is non-negative as well. For a polynomial
in $\mathbb Q[t]$, these conditions can be checked by square-free
factorization and exact real-root isolation. The criterion thus
reduces to $q$ finite polynomial tests with the stated degree bounds.
\end{proof}

\begin{remark}
    The choice of lifts does not affect the criterion. If
$x'_i=\tau^{s_i}x_i$ and
$D(z)=\operatorname{diag}(z^{s_1},\ldots,z^{s_q})$, then
\[
P'(z)=D(z)P(z)D(z)^{-1},\qquad
H'(z)=D(z)H(z)D(z)^*\quad(|z|=1).
\]
Since $D(z)$ is unitary on the unit circle, $H'$ and $H$ are
positive semidefinite at the same points. With $M=L-1$ fixed,
similarity also gives
\[
\det\!\left(sI_q+(1+t^2)^M H'(z(t))\right)
=\det(sI_q+K(t)).
\]
This is an identity of rational functions in $t$, so the
determinant coefficients are the same polynomials $c_j(t)$.
The spanning-tree lifts ensure that the entries of $K(t)$
themselves are polynomials of degree at most $2M$.
Replacing $\tau$ by $\tau^{-1}$ replaces $z$ by $z^{-1}$ and
leaves the unit-circle condition unchanged.
\end{remark}

Theorems~\ref{thm:monomial} and~\ref{thm:fourier} together prove
Theorem~\ref{thm:main}.

\begin{corollary}\label{cor:nonoriented-cycle}
Let $A=\kk Q/I$ be a finite-dimensional connected one-cycle
monomial algebra whose unique cycle is not oriented. Using
spanning-tree lifts, define
\[
K_1(t)=(1+t^2)H\!\left(\frac{1+\mathrm i t}{1-\mathrm i t}\right),
\qquad
\det(sI_q+K_1(t))=s^q+\sum_{j=1}^q b_j(t)s^{q-j}.
\]
Then $b_j\in\mathbb Q[t]$ and $\deg b_j\leq2j$ whenever
$b_j\ne0$. Moreover, $A$ is derived tame if and only if
$b_j(t)\geq0$ for all $t\in\mathbb R$ and $1\leq j\leq q$.
\end{corollary}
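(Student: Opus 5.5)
The plan is to rerun the argument of Theorem~\ref{thm:fourier} with $M=1$ in place of $M=L-1$. The key observation is that when the cycle is not oriented, the Laurent polynomial $H(z)$ only involves exponents $-1,0,1$. Everything else then follows formally.

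\textbf{Step 1: bound the weights.} I will show that, with spanning-tree lifts, $D_d=0$ unless $d\in\{0,1\}$. Every arrow has weight $0$ or $1$, and only the single non-tree arrow $\gamma$ has weight $1$. So the weight of a nonzero path is the number of times it traverses $\gamma$, and $D_d\neq0$ for some $d\geq2$ would need a path using $\gamma$ twice. Such a path contains a subpath from the end of $\gamma$ back to its start. Since $Q$ is one-cycle, the only undirected route between these endpoints that avoids $\gamma$ is the tree path completing the cycle. A directed path in $Q$ from $t(\gamma)$ to $s(\gamma)$ must therefore traverse that unique undirected cycle consistently. This would make the cycle oriented, which is a contradiction.

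I should be careful here: a directed walk could wander into tree branches and come back. But in a tree, a directed walk cannot return along the same edge, since that edge would be traversed in the wrong direction. So any directed walk between two vertices in the cover is the unique geodesic. Arguing in the cover tree makes this clean: a path from $x_i$ to $\tau^2x_j$ would lift to a directed geodesic crossing a full period of the lifted cycle line, which forces every edge of the cycle to be coherently oriented. Hence $H_d=0$ for $|d|>1$, and $H(z)=H_{-1}z^{-1}+H_0+H_1z$.

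\textbf{Step 2: redo the polynomial computation with $M=1$.} Exactly as in step~4 of the proof of Theorem~\ref{thm:fourier}, I expand
\[
(1+t^2)z(t)^{\pm1}=(1\pm\mathrm i t)^2 .
\]
So $K_1(t)$ has entries in $\mathbb Q(\mathrm i)[t]$ of degree at most $2$, and it is Hermitian for real $t$. As before, $b_j$ is a sum of principal $j$-minors, so $\deg b_j\le 2j$. Realness of $b_j$ on $\mathbb R$ gives $b_j\in\mathbb Q[t]$.

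\textbf{Step 3: the criterion.} The eigenvalue argument gives $K_1(t)\succeq0$ if and only if $b_j(t)\ge0$ for all $j$. Since $1+t^2>0$, this is equivalent to $H(z(t))\succeq0$. Continuity at $z=-1$ then gives $H\succeq0$ on the whole unit circle. Finally, Theorem~\ref{thm:fourier} (ii)$\Leftrightarrow$(i) combined with Theorem~\ref{thm:monomial} yields derived tameness.

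Alternatively, I can note that $K(t)=(1+t^2)^{L-2}K_1(t)$, so the two matrices have the same positivity for each real $t$. This avoids redoing the argument entirely.

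The main obstacle is Step 1, namely making rigorous that no nonzero path has weight at least $2$ when the cycle is unoriented. I will carry it out in the covering tree using the geodesic argument above.
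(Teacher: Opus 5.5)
Your proposal is correct and follows the paper's proof. The paper likewise shows $P(z)=D_0+zD_1$, though more briefly: $Q$ has no oriented cycles, so a directed path visits $s(\gamma)$ at most once, which makes your cover-tree geodesic argument valid but heavier than needed. It then reads off the degree bounds from $(1+t^2)z(t)^{\pm1}=(1\pm\mathrm i t)^2$ and concludes via $K(t)=(1+t^2)^{L-2}K_1(t)$, i.e.\ $c_j(t)=(1+t^2)^{j(L-2)}b_j(t)$, exactly your alternative ending.
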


\begin{proof}
The quiver $Q$ has no oriented cycles, so a directed path visits
each vertex at most once. It therefore uses the unique arrow
outside the spanning tree at most once. Hence $D_d=0$ for
$d\notin\{0,1\}$ and $P(z)=D_0+zD_1$. It follows that
\[
K_1(t)=\frac12\bigl((1+t^2)(D_0+D_0^T)
 +(1+\mathrm i t)^2D_1+(1-\mathrm i t)^2D_1^T\bigr).
\]
Its entries lie in $\mathbb Q(\mathrm i)[t]$ and have degree
at most two. The coefficient $b_j$ is the sum of its principal
minors of order $j$, so it has degree at most $2j$ when nonzero.
Since $K_1(t)$ is Hermitian for real $t$, these coefficients
are real on $\mathbb R$ and therefore belong to $\mathbb Q[t]$.

For the matrix $K(t)$ and coefficients $c_j(t)$ in
Theorem~\ref{thm:fourier}, we have
\[
K(t)=(1+t^2)^{L-2}K_1(t),\qquad
c_j(t)=(1+t^2)^{j(L-2)}b_j(t).
\]
The factors are positive for real $t$, so the two sets of
polynomial inequalities are equivalent. Theorem~\ref{thm:main}
now gives the result.
\end{proof}

\section{Applications to one-cycle monomial algebras}\label{sec:examples}

We first apply the criterion to truncated cycles and to path
algebras of acyclic orientations of a cycle. We then consider
a string algebra with a minimal cubic relation, give a second
proof of its derived tameness by flat deformation, and
construct its finite cyclic covers.

\begin{proposition}\cite[Lemma~3.2]{BGV21}
\label{prop:truncated-cycles}
Let $Q$ be an oriented cycle on $n\geq1$ vertices and let
$A_{n,r}=\kk Q/R_Q^r$, where $r\geq2$.
Then $A_{n,r}$ is derived tame if and only if $r=2$.
\end{proposition}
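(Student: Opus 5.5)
The plan is to verify condition (ii) of Theorem~\ref{thm:monomial} directly on the universal cover, where everything reduces to one scalar trigonometric polynomial. For $Q$ an oriented cycle on $n$ vertices (including the loop $n=1$), the underlying graph is a single cycle. Hence $Q$ is one-cycle and its universal covering quiver is the linearly oriented line $A_\infty^\infty$. I will label its vertices by $a\in\Z$, with arrows $a\to a+1$. The deck group is generated by the shift $a\mapsto a+n$.

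The lifted relations are all paths of length $r$. So from each vertex $a$ the nonzero paths are exactly the paths to $a+\ell$ with $0\le\ell\le r-1$, one path for each such $\ell$. By Steps~1 and~2 of the proof of Theorem~\ref{thm:fourier}, the local Euler condition is equivalent to
\[
\Phi(y)=\sum_{a\in\Z}\sum_{\ell=0}^{r-1}y_ay_{a+\ell}\ \geq0
\]
for all finitely supported real sequences $y$. The congruence \eqref{eq:euler-congruence} and the convex-hull argument of Lemma~\ref{lem:cover} apply verbatim on the line. Step~3 of that proof only uses that $\Phi$ is a translation-invariant form with finitely many nonzero coefficients. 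Applied with $q=1$ and translation by one vertex, it shows that $\Phi\geq0$ if and only if the real symbol
\[
h(\theta)=1+\sum_{\ell=1}^{r-1}\cos(\ell\theta)
\]
is non-negative for all $\theta$. Alternatively, one could form the $n\times n$ matrix $H(z)$ from spanning-tree lifts and diagonalize its circulant structure to obtain the values $h$ at the $n$-th roots of $z$. I prefer the direct scalar argument, since it avoids that bookkeeping.

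It remains to analyze $h$. Writing $m=r-1$, we have $h(\theta)=\tfrac12\bigl(1+D_m(\theta)\bigr)$, where $D_m(\theta)=\sin((m+\tfrac12)\theta)/\sin(\theta/2)$ is the Dirichlet kernel.

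For $r=2$ we get $h(\theta)=1+\cos\theta\ge0$. So $A_{n,2}$ satisfies the local Euler condition and is derived tame by Theorem~\ref{thm:monomial}.

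For $r\ge3$, that is $m\ge2$, I evaluate at $\theta_0=3\pi/(2m+1)$. There $\sin((m+\tfrac12)\theta_0)=\sin(3\pi/2)=-1$. Since $m\ge2$ gives $0<\theta_0/2<\pi/2$, the denominator satisfies $0<\sin(\theta_0/2)<1$. Hence $D_m(\theta_0)<-1$ and $h(\theta_0)<0$. Explicitly, for $r=3$ one has $h=2\cos^2\theta+\cos\theta$, which equals $-1/8$ at $\cos\theta=-1/4$.

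The converse direction of Step~3 then produces a finitely supported $y$ with $\Phi(y)<0$, namely a truncated wave $y_a=\cos(a\theta_0)$ or its complex analogue. This gives a finite convex subcategory with negative Euler form. So $A_{n,r}$ is derived wild by Corollary~\ref{cor:monomial_der_wild}, and hence not derived tame.

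The main point needing care is the transfer from the $n$-vertex quotient to the scalar symbol on the line, i.e. making sure the Fourier argument of Theorem~\ref{thm:fourier} legitimately applies to the shift by one cover vertex rather than the deck generator. Since Steps~1 and~2 are stated for arbitrary finitely supported vectors on the cover and Step~3 is pure Toeplitz analysis, this should go through without change. The explicit choice of $\theta_0$ removes any difficulty in locating the minimum of $h$.
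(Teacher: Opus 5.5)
Your proposal is correct and is essentially the paper's proof. The paper settles your concern about using the unit shift instead of the deck generator by noting that the universal cover is the same bound line for every $n$: by Theorem~\ref{thm:monomial} one may take $n=1$, where the deck generator is the unit shift and the $1\times1$ matrix $H$ is exactly your $h(\theta)=\sum_{j=0}^{r-1}\cos(j\theta)$. The paper then proves $h<0$ on $\pi/(r-1)<\theta<2\pi/r$ using $h(\theta)=\sin(r\theta/2)\cos((r-1)\theta/2)/\sin(\theta/2)$, and your point $\theta_0=3\pi/(2r-1)$ lies in that interval, so the difference is only cosmetic.
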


\begin{proof}
The universal cover is the oriented infinite line with all paths
of length $r$ zero, independently of $n$. By
Theorem~\ref{thm:monomial}, it suffices to take $n=1$. In that case
the matrix has one entry:
\[
H_r(e^{i\theta})=\sum_{j=0}^{r-1}\cos(j\theta).
\]
For $r=2$ this equals $1+\cos\theta\geq0$. For $r>2$, choose
$\pi/(r-1)<\theta<2\pi/r$; then
\[
H_r(e^{i\theta})=
\frac{\sin(r\theta/2)\cos((r-1)\theta/2)}{\sin(\theta/2)}<0.
\]
The result follows from Theorem~\ref{thm:main}.
\end{proof}

For $r=3$, the polynomial test in Theorem~\ref{thm:fourier}
with $M=2$ gives
\[
(1+t^2)^2H_3\left(\frac{1+it}{1-it}\right)
=t^4-4t^2+3=(t^2-1)(t^2-3).
\]
This polynomial is negative for $1<t^2<3$, although
$H_3(1)=3>0$. Thus a test at $z=1$ alone does not decide
positive semidefiniteness on the unit circle.

We next consider a cycle whose arrows do not form an oriented
cycle, with no relations.

\begin{proposition}\cite[\S1.4(a)]{NdP01}\label{prop:nonoriented-cycle-path}
Let $Q$ be an acyclic orientation of a cycle.
Then $\kk Q$ is derived tame.
\end{proposition}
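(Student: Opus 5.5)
The plan is to apply Corollary~\ref{cor:nonoriented-cycle}, which here reduces to checking positive semidefiniteness of $H(z)$ on the unit circle, via condition (ii) of Theorem~\ref{thm:main}. Since $I=0$, every path of $Q$ is nonzero. Choose the spanning tree $\mathcal T$ by deleting one arrow $\gamma$ of the cycle, and lift it to $\widetilde{\mathcal T}$.

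Because $Q$ is acyclic, Corollary~\ref{cor:nonoriented-cycle} gives $P(z)=D_0+zD_1$. Rather than compute $H$ directly, I would go back to the Euler-form side. By step~1 of the proof of Theorem~\ref{thm:fourier}, the quadratic form $\Phi$ on finitely supported families equals $y^TP_By$ for finite convex subtrees $B$. By \eqref{eq:euler-congruence}, non-negativity of $\Phi$ is the same as $\chi_B\geq0$ for every finite connected convex $B\subseteq\U$. So it suffices to show that every finite connected convex subcategory of the universal cover has non-negative Euler form.

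With $I=0$, the universal cover is the path category of an infinite line: the cycle unrolled, with the orientations repeating periodically. A finite connected convex full subcategory is therefore a hereditary algebra $\kk Q_B$, where $Q_B$ is a finite linear quiver of type $A_m$ with some orientation. For a hereditary algebra, $\chi_B$ is the Tits form
\[
\chi_B(x)=\sum_{v}x_v^2-\sum_{\text{arrows }v\to w}x_vx_w=\tfrac12\Bigl(x_{1}^2+x_m^2+\sum_{a=1}^{m-1}(x_a-x_{a+1})^2\Bigr)
\]
for $A_m$ with vertices in linear order. This form is positive definite, so condition (ii) of Theorem~\ref{thm:monomial} holds, and Theorem~\ref{thm:monomial} gives derived tameness.

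The only point needing care is the identification of convex subcategories with $A_m$ path algebras. It uses that no relation is lifted, and that the covering tree of a graph with one cycle is a line, since every vertex of the cycle has degree two in the cycle. Here $Q$ is itself a cycle, so every vertex of the cover has degree two; the case where $Q$ is a single loop cannot occur because the orientation is acyclic. As a cross-check, one could also verify directly that $H(e^{i\theta})$ is a periodic Jacobi-type matrix $\tfrac12(2I-\text{adjacency twisted by }z)$, which is positive semidefinite since its eigenvalues are $1-\cos(\cdot)$. The Tits-form argument avoids that computation, so I expect no real obstacle.
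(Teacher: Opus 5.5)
Your proof is correct, but it takes a different route from the paper. You verify condition~(ii) of Theorem~\ref{thm:monomial} directly. Since $I=0$ and the underlying graph is a cycle, $\U$ is a bi-infinite line with no relations. Every finite connected convex full subcategory is therefore a path algebra of type $A_m$, and its Euler form is the Tits form, which your sum-of-squares identity shows is positive definite.

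The paper instead checks condition~(ii) of Theorem~\ref{thm:main}. With $N(z)$ the weighted arrow matrix, it writes $P(z)=(I_q-N(z))^{-1}$ and shows $2R(z)^*H(z)R(z)=2I_q-N(z)-N(z)^*$ for $R(z)=I_q-N(z)$. This is a twisted Laplacian whose Hermitian form is $x\mapsto\sum_{a:i\to j}|x_i-z^{w(a)}x_j|^2$.

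Your route is more elementary: it needs only the Tits form of Dynkin type $A$, with no Fourier symbol. The opening detour through $\Phi$, \eqref{eq:euler-congruence} and Corollary~\ref{cor:nonoriented-cycle} can simply be dropped, since you only use Theorem~\ref{thm:monomial}\,(ii)$\Rightarrow$(i). The paper's route serves as a worked instance of the new matrix criterion. It also gives the finer spectral information in the subsequent remark: $H(z)$ is positive definite for $z\neq1$ and has corank one at $z=1$. The local Euler forms, which are all positive definite, do not show this.

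One correction to your closing aside: $H(e^{\mathrm i\theta})$ is not itself $\tfrac12(2I_q-N-N^*)$. Its entries are non-negative counts over all nonzero paths, not just trivial paths and arrows, whereas that matrix has negative off-diagonal entries. $H$ is only congruent to it via $R(z)$, and that congruence is exactly the paper's proof. Since the aside is not load-bearing, this does not affect your argument.
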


\begin{proof}
Put $A=\kk Q$ and $q=|Q_0|$. Choose a spanning tree of the underlying graph.
Give the unique arrow outside this tree weight $1$, and give
every tree arrow weight $0$, as in Section~\ref{sec:fourier}.
Define
\[
N(z)_{ij}=\sum_{a:i\to j}z^{w(a)}.
\]
The entries of $N(z)^r$ count paths of length $r$, with each path
contributing $z$ raised to its total weight.

Since $Q$ has no oriented cycles, every path has length less
than $q$. Thus $A$ is finite-dimensional and $N(z)^q=0$.
There are no relations, so the path-count matrix is
\[
P(z)=I_q+N(z)+\cdots+N(z)^{q-1}
    =(I_q-N(z))^{-1}.
\]

Fix $z$ with $|z|=1$, and write $R(z)=I_q-N(z)$.
Then $R(z)$ is invertible and
$$H(z)=\frac12(P(z)+P(z)^*), $$
where $*$ denotes conjugate
transpose. Hence
\begin{align*}
2R(z)^*H(z)R(z)
 &=R(z)^*P(z)R(z)+R(z)^*P(z)^*R(z)\\
 &=R(z)^*+R(z)\\
 &=2I_q-N(z)-N(z)^*.
\end{align*}
Every vertex of the underlying graph has degree two.
Consequently, for $x\in\mathbb C^q$,
\[
x^*\bigl(2I_q-N(z)-N(z)^*\bigr)x
 =\sum_{a:i\to j}
   \left|x_i-z^{w(a)}x_j\right|^2.
\]
This proves that $2I_q-N(z)-N(z)^*$ is positive semidefinite.
Since $R(z)$ is invertible, $H(z)$ is positive semidefinite
for every $|z|=1$. It shows that A is derived tame by Theorem~\ref{thm:main}.
\end{proof}

\begin{remark}
In the setting of Proposition~\ref{prop:nonoriented-cycle-path},
the matrix $H(z)$ in \eqref{eq:fourier-symbol} is positive
definite for $|z|=1$ and $z\ne1$, while $H(1)$ has corank one.

Indeed, suppose that the sum of squares in the proof vanishes.
Every tree arrow has weight zero, so its endpoints have equal
coordinates. Since the spanning tree is connected, all coordinates
of $x$ have a common value $c$. The remaining arrow gives $c=zc$.
If $z\ne1$, then $c=0$, so $2I_q-N(z)-N(z)^*$ is positive definite.
If $z=1$, its kernel consists of the constant vectors and has
dimension one. The invertible congruence used in the proof
preserves positive definiteness and the dimension of the kernel,
so both conclusions hold for $H(z)$.
\end{remark}

In contrast to Proposition \ref{prop:truncated-cycles}, where cubic relations on an oriented cycle yield derived wildness, a cubic relation located along a tree branch outside the central cycle does not destroy derived tameness. The following example illustrates this phenomenon.

\begin{example}\label{ex:nonquadratic}
Let $Q$ be the quiver
\begin{center}
\begin{tikzpicture}[quiver,x=1cm,y=1cm]
\node[quiver vertex] (v9) at (0,0) {$9$};
\node[quiver vertex] (v8) at (1.35,0) {$8$};
\node[quiver vertex] (v7) at (2.70,0) {$7$};
\node[quiver vertex] (v1) at (4.05,0) {$1$};
\node[quiver vertex] (v2) at (5.40,0) {$2$};
\node[quiver vertex] (v3) at (6.75,0) {$3$};
\node[quiver vertex] (v4) at (8.10,0) {$4$};
\node[quiver vertex] (v5) at (9.45,0) {$5$};
\node[quiver vertex] (v6) at (10.80,0) {$6$};
\node[quiver vertex] (v10) at (6.75,1.30) {$10$};
\node[quiver vertex] (v11) at (8.10,1.30) {$11$};
\node[quiver vertex] (v12) at (9.45,1.30) {$12$};
\node[quiver vertex] (v13) at (10.80,1.30) {$13$};
\draw[->] (v1) -- node[quiver label,above] {$\beta_1$} (v7);
\draw[->] (v7) -- node[quiver label,above] {$\beta_2$} (v8);
\draw[->] (v8) -- node[quiver label,above] {$\beta_3$} (v9);
\draw[->] (v1) -- node[quiver label,above] {$\alpha_1$} (v2);
\draw[->] (v2) -- node[quiver label,above] {$\alpha_2$} (v3);
\draw[->] (v3) -- node[quiver label,above] {$\alpha_3$} (v4);
\draw[->] (v4) -- node[quiver label,above] {$\alpha_4$} (v5);
\draw[->] (v5) -- node[quiver label,above] {$\alpha_5$} (v6);
\draw[->,rounded corners=5pt] (v6.south) -- (10.80,-0.65)
  -- node[quiver label,below] {$\alpha_6$} (4.05,-0.65) -- (v1.south);
\draw[->] (v3) -- node[quiver label,left] {$\gamma_1$} (v10);
\draw[->] (v10) -- node[quiver label,above] {$\gamma_2$} (v11);
\draw[->] (v13) -- node[quiver label,above] {$\delta_2$} (v12);
\draw[->] (v12) -- node[quiver label,right] {$\delta_1$} (v5);
\end{tikzpicture}
\end{center}
With $\alpha$-subscripts read modulo $6$, put
\[
A=\kk Q/\langle\alpha_i\alpha_{i+1}\ (1\leq i\leq6),
\ \beta_1\beta_2\beta_3\rangle.
\]
Then $A$ is a derived-tame one-cycle string algebra with a
minimal cubic relation.
\end{example}

\begin{proof}
The graph has $13$ vertices and $13$ edges, with at most two
arrows entering and at most two arrows leaving each vertex. Its nonzero
length-two paths are
\[
\mathcal L_2\cup\{\beta_2\beta_3\},\qquad
\mathcal L_2=
\{\alpha_6\beta_1,\ \beta_1\beta_2,\
\alpha_2\gamma_1,\ \gamma_1\gamma_2,\
\delta_2\delta_1,\ \delta_1\alpha_5\}.
\]
In this list, each arrow has at most one nonzero successor
and at most one nonzero predecessor, so $A$ is a string algebra.
Every path of length four is zero.
Both length-two subpaths of $\beta_1\beta_2\beta_3$ are nonzero,
so this is a minimal cubic relation. Thus the ideal is
admissible and the algebra is not quadratic.

\emph{Proof by the matrix criterion.}
We compute the matrix in Theorem~\ref{thm:fourier}. Give $\alpha_6$ weight $1$ and all
other arrows weight $0$. In the vertex order
$(1,\ldots,6\mid7,\ldots,13)$, write
\[
2H(z)=\begin{pmatrix}W(z)&U(z)\\U(z)^*&V\end{pmatrix},
\qquad |z|=1.
\]
The only nonzero paths between cycle vertices are the trivial
paths and the arrows $\alpha_i$. Thus $W$ has diagonal $2$, entries
$W_{i,i+1}=W_{i+1,i}=1$ for $1\leq i\leq5$,
$W_{6,1}=z$, $W_{1,6}=z^{-1}$, and all other entries zero.
Write $J_m$ for the all-ones matrix and $u_i$ for the standard
basis vectors of $\mathbb C^6$. The remaining blocks are
\[
V=\operatorname{diag}(I_3+J_3,I_2+J_2,I_2+J_2),
\qquad b=(1,1,0),\quad c=(1,1),
\]
\[
U(z)=\bigl((u_1+zu_6)b\mid(u_2+u_3)c\mid(u_5+u_6)c\bigr).
\]
The blocks of $V$ come from the paths within the three branches.
For example, the branch on $7,8,9$ has the paths $\beta_2$,
$\beta_3$ and $\beta_2\beta_3$, so its block is $I_3+J_3$.
The first block of $U$ counts the paths from $1$ and $6$ to
$7$ and $8$; the relation $\beta_1\beta_2\beta_3=0$ excludes
paths from either vertex to $9$. The second block counts the
paths from $2$ and $3$ to $10$ and $11$. The third block comes
from paths in the opposite direction, from $12$ and $13$ to
$5$ and $6$, through the term $P(z^{-1})^T$ in $2H(z)$.
All paths contributing to the second and third blocks have weight zero.

The matrix $I_m+J_m$ has eigenvalue $m+1$ once and eigenvalue
$1$ with multiplicity $m-1$, and
\[
(I_m+J_m)^{-1}=I_m-\frac{1}{m+1}J_m.
\]
Thus $V$ is positive definite. The Schur complement of $V$ is
$S=W-UV^{-1}U^*$.
For $x\in\mathbb C^6$ and $y\in\mathbb C^7$, we have
\[
\begin{pmatrix}x\\y\end{pmatrix}^{\!*}
\begin{pmatrix}W&U\\U^*&V\end{pmatrix}
\begin{pmatrix}x\\y\end{pmatrix}
=x^*Sx+(y+V^{-1}U^*x)^*V(y+V^{-1}U^*x).
\]
Thus it remains to check that $S$ is positive semidefinite.
The identities
\[
b(I_3+J_3)^{-1}b^T=1,\qquad
c(I_2+J_2)^{-1}c^T=\frac23
\]
give the following contributions to $UV^{-1}U^*$:
$\left(\begin{smallmatrix}1&z^{-1}\\z&1\end{smallmatrix}\right)$
on indices $(1,6)$, and $\frac23J_2$ on each of $(2,3)$ and
$(5,6)$. After subtraction from $W$, all entries containing $z$
cancel. Thus
\begin{align*}
x^*Sx={}&|x_1+x_2|^2+\tfrac13|x_2+x_3|^2
+|x_3+x_4|^2
+|x_4+x_5|^2+\tfrac13|x_5+x_6|^2\geq0
,~ x\in\mathbb C^6.
\end{align*}
Thus $H(z)\succeq0$ on the entire unit circle, and $A$ is
derived tame by Theorem~\ref{thm:main}.

\emph{Proof by a flat deformation.}
We adapt the local construction in \cite[Lemma~2.4]{NdP01}
and verify flatness directly. Add an arrow $\theta:7\to9$
to obtain $Q^+$, and set
\begin{equation}\label{eq:family}
\mathcal A=
\kk[t]Q^+/\langle\alpha_i\alpha_{i+1},\
\beta_2\beta_3-t\theta,\ \beta_1\theta\rangle.
\end{equation}
A $\kk[t]$-basis is
\[
\{e_1,\ldots,e_{13}\}\ \cup\ Q^+_1\ \cup\ \mathcal L_2
\ \cup\
\{\alpha_6\beta_1\beta_2,\
\alpha_2\gamma_1\gamma_2,\
\delta_2\delta_1\alpha_5\}.
\]

To verify this basis, use the rules
$\alpha_i\alpha_{i+1}\mapsto0$,
$\beta_2\beta_3\mapsto t\theta$ and $\beta_1\theta\mapsto0$.
The only arrow entering vertex $7$ is $\beta_1$, and no arrow
leaves vertex $9$. Thus every path of length at least two that
contains $\theta$ has $\beta_1\theta$ as a subpath and reduces
to zero. Every reduction either gives zero or shortens the path,
so the reduction process terminates.

The only overlapping left sides occur in
$\alpha_i\alpha_{i+1}\alpha_{i+2}$, and reducing either pair
gives zero. Reductions on disjoint subpaths commute.
Also, $\beta_1\beta_2\beta_3$ reduces first to $t\beta_1\theta$
and then to zero. Hence the displayed paths are exactly the
irreducible paths, and every element has a unique normal form
as a $\kk[t]$-linear combination of them. Since there are
$13+14+6+3=36$ such paths, $\mathcal A$ is free of rank $36$
over $\kk[t]$ and the family is flat.

For $\lambda\ne0$, eliminating
$\theta=\lambda^{-1}\beta_2\beta_3$ identifies
$\mathcal A/(t-\lambda)\mathcal A$ with $A$. The zero fibre is
\[
A_0=\kk Q^+/\langle\alpha_i\alpha_{i+1},
\ \beta_2\beta_3,\ \beta_1\theta\rangle.
\]
The zero fibre $A_0$ is gentle: at most two arrows start and at
most two end at each vertex, and its nonzero paths of length two
are precisely those in $\mathcal L_2$. This list and the displayed
relations give each arrow at most one nonzero and one zero
successor, with the same bounds for predecessors.
Since $A_0$ is derived tame \cite{BM03} and every nonzero fibre
is isomorphic to $A$, \cite[Corollary~1.5]{BD03} implies that
$A$ is derived tame.

\end{proof}

\begin{remark}\label{rem:schur-reduction}
Let $A$ be a one-cycle monomial algebra. Write $C_0$ for the
vertices on its unique cycle and put $F_0=Q_0\setminus C_0$.
Assume $F_0\ne\varnothing$, and choose the arrow of weight $1$
on the cycle. Suppose that no nonzero path with both endpoints
in $F_0$ passes through this arrow. With the vertices ordered
as $(C_0,F_0)$, write
\[
2H(z)=\begin{pmatrix}W(z)&U(z)\\ U(z)^*&V\end{pmatrix}.
\]
Every path contributing to $V$ has weight zero, so $V$ is a
constant real symmetric matrix. If $V$ is positive definite,
the completion of the square used in
Example~\ref{ex:nonquadratic} gives
\[
H(z)\succeq0\quad\Longleftrightarrow\quad
W(z)-U(z)V^{-1}U(z)^*\succeq0\qquad(|z|=1).
\]
This reduces the test to a matrix of order $|C_0|$.

To check positive definiteness of $V$, suppose further that
every nonzero path with both endpoints in $F_0$
stays in $F_0$, and let $B$ be the algebra of the induced bound
subquiver on $F_0$. Then $B$ is a product of finite tree algebras
and $V=P_B+P_B^T$. By \eqref{eq:euler-congruence}, applied
to each component, $V$ is positive definite if and only if the
Euler form of $B$ is positive definite. In particular, this holds
if each component is a directed path with no relations:
then $V$ is a direct sum of matrices $I_m+J_m$.
These conditions hold in Example~\ref{ex:nonquadratic}.
\end{remark}

Finite cyclic covers preserve derived tameness. Applying the following
proposition to Example~\ref{ex:nonquadratic} gives an infinite family.

\begin{proposition}\label{prop:finite-cyclic-covers}
Let $A$ be a finite-dimensional connected one-cycle monomial
algebra, with universal cover $\U$ and deck group
$\langle\tau\rangle$. For $n\geq1$, let
$A^{(n)}=\U/\langle\tau^n\rangle.$
Then $A^{(n)}$ is a finite-dimensional connected one-cycle
monomial algebra, and
\[
A^{(n)}\text{ is derived tame}
\quad\Longleftrightarrow\quad
A\text{ is derived tame}.
\]
\end{proposition}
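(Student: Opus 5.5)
The plan is to show that $A^{(n)}$ and $A$ have the same universal cover $\U$, and then to read off the equivalence from Theorem~\ref{thm:monomial}(ii), which refers only to the universal cover.

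\textbf{Step 1: $A^{(n)}$ is a finite-dimensional connected one-cycle monomial algebra.} Since $\langle\tau^n\rangle$ acts freely on the covering tree, the quotient $\U/\langle\tau^n\rangle$ is a bound quiver $Q^{(n)}$. Its vertices are the orbits of $\tau^k x_i$ for $0\le k<n$, so $|Q^{(n)}_0|=nq$, and likewise $|Q^{(n)}_1|=n|Q_1|$. Hence
\[
|Q^{(n)}_1|-|Q^{(n)}_0|+1=n(|Q_1|-|Q_0|)+1=1 .
\]
The quotient is connected because $\U$ is a connected tree.

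The lifted relations are paths, and they are $\tau^n$-stable, so they descend to a set of monomial relations on $Q^{(n)}$. Finite-dimensionality then follows from the isomorphism
\[
\bigoplus_{g\in\langle\tau^n\rangle}\U(x,gy)\cong A^{(n)}(\bar x,\bar y),
\]
together with local boundedness of $\U$ (Lemma~\ref{lem:cover}). Admissibility needs two points. Every path of length at least $L$ in the cover contains a lifted relation, so $R^L$ lies in the ideal. Relations have length at least $2$ because those of $A$ do.

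\textbf{Step 2: the universal cover of $A^{(n)}$ is again $\U$.} The covering quiver of $Q^{(n)}$ is a tree, and a universal graph cover is determined by being a tree. The relations lifted from $Q^{(n)}$ are exactly the lifted relations of $A$. So the universal cover of $A^{(n)}$, described in Lemma~\ref{lem:cover} via \cite[Remark~2.5]{Fon22}, is isomorphic to $\U$ as a bound quiver, with deck group $\langle\tau^n\rangle$.

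\textbf{Step 3: conclude via Theorem~\ref{thm:monomial}.} Condition (ii) of Theorem~\ref{thm:monomial} depends only on the universal cover, so $A$ is derived tame if and only if $A^{(n)}$ is. Alternatively, one can pass through (iii), tameness of $\widehat{\U}$. A third check uses Theorem~\ref{thm:fourier}: with lifts $\tau^kx_i$, the symbol of $A^{(n)}$ is unitarily equivalent to the block matrix $\bigoplus_{\omega^n=w}H(\omega)$. Positivity of this for all $|w|=1$ is equivalent to positivity of $H$ on the whole circle.

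\textbf{Main obstacle.} I expect the bookkeeping in Step 1 to be the main difficulty. I would verify carefully that the quotient relations are generated by paths, and that $\U/\langle\tau^n\rangle$ really is $\kk Q^{(n)}/I^{(n)}$ with $I^{(n)}$ admissible. For this I would use that monomial relations are determined by their set of zero paths, and that a path in $Q^{(n)}$ is zero exactly when its lift is zero in $\U$.
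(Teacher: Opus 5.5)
Your proposal is correct and takes essentially the same route as the paper. Both count vertices and arrows to get $n|Q_1|-n|Q_0|+1=1$, both note that the relations of $A^{(n)}$ are the lifted monomial relations (so $A^{(n)}$ is finite-dimensional and monomial), and both observe that $\U\to A^{(n)}$ is a covering with deck group $\langle\tau^n\rangle$ whose tree source makes it the universal cover, so condition~(ii) of Theorem~\ref{thm:monomial} is the same for $A$ and $A^{(n)}$. Your optional cross-check, that $H^{(n)}(w)$ is unitarily equivalent to $\bigoplus_{\omega^n=w}H(\omega)$, is also correct but not needed.
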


\begin{proof}
The quotient quiver is connected and has $n|Q_0|$ vertices and
$n|Q_1|$ arrows. Its relations are the lifts of the monomial
relations of $A$, so every nonzero path has length less than $L$
and $A^{(n)}$ is finite-dimensional and monomial. The identity
$n|Q_1|-n|Q_0|+1=1$ shows that it is one-cycle.

The projection $\U\to A^{(n)}$ is a bound-quiver covering
with deck group $\langle\tau^n\rangle$. Its source has a tree
as its underlying graph and is therefore simply connected.
Hence it is the universal cover of $A^{(n)}$. Thus $A$ and
$A^{(n)}$ have the same local Euler condition, and
Theorem~\ref{thm:monomial} gives the equivalence.
\end{proof}

\begin{corollary}\label{cor:nonquadratic-family}
For each $n\geq1$, there is a derived-tame one-cycle string
algebra with $13n$ vertices, dimension $36n$, and $n$ minimal
cubic relations. No two algebras in this family are derived equivalent.
\end{corollary}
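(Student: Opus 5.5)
The family will be $A^{(n)}=\U/\langle\tau^n\rangle$, where $A$ is the algebra of Example~\ref{ex:nonquadratic} and $\tau$ generates its deck group. I will check the stated properties in turn and then separate the members up to derived equivalence.

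\textbf{Existence and derived tameness.} By Proposition~\ref{prop:finite-cyclic-covers}, $A^{(n)}$ is a finite-dimensional connected one-cycle monomial algebra, and it is derived tame because $A$ is. Its quiver is an $n$-fold cyclic cover of $Q$, so it has $13n$ vertices. Its basis of nonzero paths is the $n$-fold lift of the basis of $A$. From the flat-deformation count of the $36$ irreducible paths, with $\theta$ and $t$ removed, $A$ has $13+13+7+3=36$ nonzero paths, and hence $\dim A^{(n)}=36n$. The string conditions are local at vertices and arrows, and the covering is a local isomorphism of bound quivers, so they transfer from $A$. The relations of $A^{(n)}$ are the lifts of the relations of $A$. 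The cubic relation $\beta_1\beta_2\beta_3$ has exactly $n$ lifts, and each is minimal because its length-two subpaths lift nonzero paths. Ideally $\beta$ is chosen as a tree arrow, so each lift lies in a single sheet.

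\textbf{No two are derived equivalent.} Derived equivalence preserves $\rank K_0$, since $K_0(A)\cong K_0(D^b(A))$ as noted in Section~\ref{sec:prelim}. We have $\rank K_0(A^{(n)})=13n$, and these ranks are distinct for distinct $n$, so $A^{(m)}$ and $A^{(n)}$ are not derived equivalent for $m\ne n$.

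The main obstacle is not the argument but the bookkeeping. I need to confirm the dimension count of $36$ for $A$ itself and to verify that lifting preserves the string conditions and the minimality of the cubic relations. For the latter, I would argue that the covering functor induces bijections on arrows and paths starting at each vertex. Both the counts and the local conditions then pass up to $A^{(n)}$ sheet by sheet.
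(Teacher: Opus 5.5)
Your proposal is correct and follows essentially the same route as the paper. Both arguments use the family $A^{(n)}=\U/\langle\tau^n\rangle$ and obtain derived tameness from Proposition~\ref{prop:finite-cyclic-covers}; both transfer the string property and the $n$ minimal cubic relations through the covering, which is a local isomorphism of bound quivers; both get $\dim_\kk A^{(n)}=36n$ because each of the $36=13+13+7+3$ nonzero paths of $A$ has exactly $n$ lifts; and both separate the members by $\rank K_0(A^{(n)})=13n$. The paper additionally writes out an explicit presentation of $A^{(n)}$, with $\alpha_6$ as the arrow joining consecutive sheets, and your remark about choosing the $\beta$ arrows in the spanning tree is harmless but unnecessary, since the number of lifts of a path does not depend on how the sheets are chosen.
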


\begin{proof}
Let $A$ be the algebra in Example~\ref{ex:nonquadratic}, and
take $A^{(n)}$ as in Proposition~\ref{prop:finite-cyclic-covers}.
An explicit presentation has vertices
\[
(j,i),\qquad j\in\mathbb Z/n\mathbb Z,\quad 1\leq i\leq13.
\]
For each $j$, copy all arrows of the example except
$\alpha_6$, and replace that arrow by
\[
\alpha_{j,6}:(j,6)\longrightarrow(j+1,1).
\]
Write $\alpha_{j,i}$, $\beta_{j,i}$, $\gamma_{j,i}$ and
$\delta_{j,i}$ for the resulting arrows. The relations are
\[
\begin{gathered}
\alpha_{j,i}\alpha_{j,i+1}=0
\quad(1\leq i\leq5),\qquad
\alpha_{j,6}\alpha_{j+1,1}=0,\qquad
\beta_{j,1}\beta_{j,2}\beta_{j,3}=0
\quad(j\in\mathbb Z/n\mathbb Z).
\end{gathered}
\]
The covering preserves the number of arrows at each vertex
and the allowed continuations of each arrow, so $A^{(n)}$ is a
string algebra. Both length-two subpaths of each cubic relation
are nonzero, giving $n$ distinct minimal cubic relations.
Every path of length four is zero.

Each nonzero path of $A$, including a trivial path, has exactly
$n$ lifts to this quiver. Thus
$\dim_\kk A^{(n)}=n\dim_\kk A=36n$.
Derived tameness follows from
Proposition~\ref{prop:finite-cyclic-covers}.
Since derived equivalence preserves the rank of the Grothendieck
group, the equality $\rank K_0(A^{(n)})=13n$ distinguishes the
derived equivalence classes for different values of $n$.
\end{proof}

\begin{remark}\label{rem:1-cycle-extension} 
The construction in Corollary~\ref{cor:nonquadratic-family} is entirely general. If $A$ is any derived tame one-cycle monomial algebra with $d$ vertices, the sequence of finite cyclic coverings $A^{(n)} = \tilde{A}/\langle\tau^n\rangle$ yields an infinite family of pairwise non-derived equivalent derived tame algebras, distinguished by $\operatorname{rank} K_0(A^{(n)}) = nd$. Consequently, every derived tame one-cycle monomial algebra gives rise to an infinite family of this type.
\end{remark}

\end{document}